\documentclass[12pt]{article}
\usepackage[small,compact]{titlesec}
\usepackage{amsmath,amssymb,amsfonts,mathabx,setspace}
\usepackage{graphicx,caption,epsfig,epsfig,wrapfig}
\usepackage{caption}
\usepackage[caption=false]{subfig}
\usepackage{float}
\usepackage{url,color,verbatim,algorithmicx,xcolor}
\usepackage[noend]{algpseudocode}
\usepackage[ruled,boxed]{algorithm} 

\usepackage{enumitem}
\usepackage{booktabs}  
\usepackage[sort,compress]{cite}
\usepackage[scaled]{helvet}
\usepackage[T1]{fontenc}
\usepackage[bookmarks=true, bookmarksnumbered=true, colorlinks=true,   pdfstartview=FitV,
linkcolor=blue, citecolor=blue, urlcolor=blue]{hyperref}
\usepackage{cleveref}
\usepackage[english]{babel}  
\usepackage{multirow}

\usepackage{tikz-cd}  
\usepackage{arydshln}
\usepackage[sort,compress]{cite}
\usepackage[normalem]{ulem}
\usepackage{mathtools}
\newcommand{\mathsout}[1]
{\bgroup\mathchoice
  {\sbox0{$\displaystyle{#1}$}%
    \usebox0\hspace{-\wd0}%
    \rule[0.5\ht0-0.5\dp0-.5pt]{\wd0}{1pt}}%
  {\sbox0{$\textstyle{#1}$}%
    \usebox0\hspace{-\wd0}%
    \rule[0.5\ht0-0.5\dp0-.5pt]{\wd0}{1pt}}%
  {\sbox0{$\scriptstyle{#1}$}%
    \usebox0\hspace{-\wd0}%
    \rule[0.5\ht0-0.5\dp0-.5pt]{\wd0}{1pt}}%
  {\sbox0{$\scriptscriptstyle{#1}$}%
    \usebox0\hspace{-\wd0}%
    \rule[0.5\ht0-0.5\dp0-.5pt]{\wd0}{1pt}}%
\egroup}
\marginparwidth 0pt
\oddsidemargin  -0.15in   
\evensidemargin  -0.15in 
\marginparsep 0pt
\topmargin   -.55in   
\textwidth   6.8in      
\textheight  9.2in      
\parskip=0.0in


\def\bf{\mathbf{f}}

\def\bA{\mathbf{A}}  \def\ba{\mathbf{a}} 
  \def\bc{\mathbf{c}}
\def\bb{\mathbf{b}}

\def\bG{\mathbf{G}}
\def\calH{\mathcal{H} }
\def\bg{\mathbf{g}}

\def\brho{\boldsymbol{\rho}}
\def\bxi{\boldsymbol{\xi}}
\def\bSigma{\boldsymbol{\Sigma}}
\newcommand{\bphi}{\boldsymbol{\phi}}


\def\R{\mathbb{R}}                            
\def\P{\mathbb{P}}

\def\E{\mathbb{E}}

\newcommand{\norm}[1]{\left\|#1\right\|}
\newcommand{\innerp}[1]{\langle{#1}\rangle}

\newcommand{\mathspan}[1]{ \mathrm{span}\left\{ {#1} \right\} }

\newcommand{\argmin}[1]{\underset{#1}{\operatorname{arg}\operatorname{min}}\;}
\newcommand{\argmax}[1]{\underset{#1}{\operatorname{arg}\operatorname{max}}\;}

\newcommand\calR{\mathcal{R}}
\newcommand\calN{\mathcal{N}}

\def\calE{\mathcal{E}}
\def\Gbar{ {\overline{G}} }

\def\LGbar{ {\mathcal{L}_{\overline{G}}}  }

\def\calS{\mathcal{S}}
\def\calN{\mathcal{N}}

\def\calX{\mathcal{X}}
\def\spaceX{\mathbb{X}}
\def\spaceY{\mathbb{Y}}

\newcommand{\beqa}{\begin{eqnarray}}
\newcommand{\eeqa}{\end{eqnarray}}
\newcommand{\beqas}{\begin{eqnarray*}}
\newcommand{\eeqas}{\end{eqnarray*}}
\newcommand{\beq}{\begin{equation}}
\newcommand{\eeq}{\end{equation}}
\newcommand{\beqs}{\begin{equation*}}
\newcommand{\eeqs}{\end{equation*}}
\newcommand{\one}{{\rm{\mathbf{1}}}}

\newtheorem{theorem}{Theorem}

\newtheorem{assumption}[theorem]{Assumption}

\newtheorem{definition}[theorem]{Definition}
\newtheorem{example}[theorem]{Example}
\newtheorem{lemma}[theorem]{Lemma}

\newtheorem{remark}[theorem]{Remark}
\newenvironment{proof}[1][Proof]{\noindent\textbf{#1.} }{\ \rule{0.5em}{0.5em}}

\numberwithin{equation}{section}
\numberwithin{theorem}{section}

\usepackage{xcolor}
\definecolor{darkmagenta}{rgb}{0.55, 0.0, 0.55}
\colorlet{colorYO}{darkmagenta}

\title{Automatic reproducing kernel and regularization for learning convolution kernels}
\author{
Haibo Li 
\thanks{School of Mathematics and Statistics, The University of Melbourne, Parkville, VIC 3010, Australia. 
\href{mailto:haibo.li@unimelb.edu.au}{haibo.li@unimelb.edu.au}
} \ and \
Fei Lu
\thanks{Department of Mathematics, Johns Hopkins University, Baltimore, USA. 
\href{mailto:feilu@math.jhu.edu}{feilu@math.jhu.edu}
}
}

\begin{document}

\date{}
\maketitle
\vspace{-2mm}

\begin{abstract} 
 Learning convolution kernels in operators from data arises in numerous applications and represents an ill-posed inverse problem of broad interest. With scant prior information, kernel methods offer a natural nonparametric approach with regularization. However, a major challenge is to select a proper reproducing kernel, especially as operators and data vary. We show that the input data and convolution operator themselves induce an automatic, data-adaptive RKHS (DA-RKHS), obviating manual kernel selection. In particular, when the observation data is discrete and finite, there is a finite set of automatic basis functions sufficient to represent the estimators in the DA-RKHS, including the minimal-norm least-squares, Tikhonov, and conjugate-gradient estimators. We develop both Tikhonov and scalable iterative and hybrid algorithms using the automatic basis functions. Numerical experiments on integral, nonlocal, and aggregation operators confirm that our automatic RKHS regularization consistently outperforms standard ridge regression and Gaussian process methods with preselected kernels.
\end{abstract}

\tableofcontents

\section{Introduction}
Kernel functions play a fundamental role in defining operators between function spaces, enabling the representation of nonlocal or long-range interactions between variables. Such kernel-based operators permeate diverse fields: they describe nonlocal diffusion and peridynamic mechanics in partial differential equations (PDEs) \cite{benson2000application,silling2007peridynamic,du2012analysis,you2022data,d2020numerical,katiyar2020general,carrillo2019aggregation,chen2018heat}, govern anomalous transport in fractional diffusion and Lévy processes \cite{applebaum09,duan2015introduction}, and underpin advanced image-processing techniques \cite{buades2010image,gilboa2009nonlocal,lou2010image}. More recently, they have become central in operator-learning frameworks for scientific machine learning, from DeepONets \cite{lu2019deeponet} and Fourier neural operators \cite{li2020fourier,kovachki2023neural}, nonlocal neural networks \cite{wang2018non,anandkumar2020neural}, and kernel methods \cite{chen2021solving,owhadi2019kernel,darcy2025learning}. 

Motivated by these applications, a natural and challenging inverse problem arises: given pairs of inputs and outputs, how can one accurately recover the underlying kernel? We address this question in the linear setting, where the operator acts by convolution against a functional of the input function. By framing kernel recovery as a deconvolution inverse problem, we lay the groundwork for rigorous analysis and practical algorithms that learn these kernels directly from data, bridging the gap between classical inverse problems and modern data-driven operator learning.

\subsection{Problem statement}
We study the problem of estimating a convolution kernel $\phi: \mathcal{S} = [0,1]\to \R$ in the operator $R_\phi: \spaceX \to \spaceY$ of the form
\begin{equation}\label{eq:R_phi_g}
R_\phi[u](x) = \int_{\mathcal{S}} \phi(s)\,g[u](x,s)\,ds, \quad x \in \mathcal{X} = \{x_j\}_{j=1}^J \subset [0,1],
\end{equation}
based on discrete and noisy input-output pairs
\begin{equation}\label{eq:data}
\mathcal{D} = \{(u_k(y_i), f_k(x_j)) : \;1 \le k \le n_0,\; 1 \le i \le 3J, \; 1 \le j \le J\}.
\end{equation}
Here, $\{y_i\}_{i=1}^{3J}$ and $\{x_j\}_{j=1}^J$ are uniform meshes of $[-1,2]$ and $[0,1]$ with mesh sizes $\Delta x =y_{i+1}-y_i= x_{j+1}-x_j= \frac{1}{J}$, and these data are generated according to
\[
f_k(x_j) = R_\phi[u_k](x_j) + \epsilon_k(x_j), \quad
\epsilon_k(x_j) \stackrel{\text{i.i.d.}}{\sim} \mathcal{N}(0,\sigma^2/\Delta x).
\]
The input function space $\spaceX\subset L^2([-1,2])$ and the functional $g[u](x,s)$ are problem-specific (see Examples \ref{example:IntOpt}-- \ref{example:aggregation}). Note that $R_{\phi}$ can be a nonlinear functional of $u$, but it depends linearly on $\phi$. In this study, we consider a fixed discrete observation set $\calX$ and set the output function space to be $\spaceY = L^2_{\nu}(\calX)$ with an atomic measure $\nu$ defined by $\nu(\{x_j\})= 1/J$. When $\calX$ is a continuum set $[0,1]$ with Lebesgue measure, the corresponding output space is $L^2([0,1])$, the noise is white, and the minimax convergence rates in the sample size $n_0$ have been studied in \cite{zhang2025minimax}.

Such deconvolution-type problems arise in a wide range of applications, and we present three representative examples. 
\begin{example}[Integral operator]\label{example:IntOpt}
Estimate $\phi:\mathcal{S}\to\mathbb{R}$ in the integral operator
\[
R_\phi[u](x) = \int_{[-1,2]} \phi(x-y)\,u(y)\,dy
= \int_{[0,1]} \phi(s)\,u(x-s)\,ds
\]
with input space $\spaceX=C([-1,2])$.  In the form \eqref{eq:R_phi_g}, the functional is $g[u](x,s) = u(x-s)$ for $(x,s)\in\mathcal{X}\times\mathcal{S}$. 
The input $u$ is a sample of the stochastic process
$u(y) = \sum_{n=2}^{n_u} X_n\cos(2\pi n y) 
$ 
with $n_u\le+\infty$, where the coefficients $\{X_n\}$ are independent Gaussian random variables $N(0,4\sigma_n^2)$ with $\sum_n n \sigma_n<+\infty$.  
\end{example}

\begin{example}[Nonlocal operator]\label{example:nonlocal}
Estimate $\phi:\mathcal{S}\to\mathbb{R}$ in the nonlocal operator:
$$
R_\phi[u](x) = \int_{|x'|\le 1} \phi(|x'|)\bigl(u(x+x')-u(x)\bigr)\,\nu(dx')
= \int_{[0,1]} \phi(s)\,g[u](x,s) \,ds,
$$
with input space $\spaceX=C^1([-1,2])$ and 
$g[u](x,s) = u(x+s)+u(x-s)-2u(x)$. This operator arises in peridynamics {\rm\cite{LAY22,you2022data,you2024nonlocal}} and the Fokker-Planck equation of L\'evy processes {\rm\cite{applebaum09}}. 
\end{example}

\begin{example}[Aggregation operator]\label{example:aggregation}
 Consider the \emph{aggregation operator} $R_\phi[u] = \nabla\cdot(u\nabla\Phi*u)$  in the mean-field equation $\partial_t u = \nu \Delta u + \nabla\cdot(u\nabla\Phi*u)$ for interacting particle systems {\rm \cite{carrillo2019aggregation,LangLu21id}}. Let $\Phi$ be a radial potential supported on $\mathcal{S}$ and set $\phi=\Phi'$. For $u\in \spaceX = C^1([-1,2])$, one has
$$
R_\phi[u](x) = \int_{\mathbb{R}} \phi(|x'|)\frac{x'}{|x'|}\,\partial_x\bigl[u(x-x')u(x)\bigr] \,\, \nu(dx')
= \int_{\mathcal{S}} \phi(s)g[u](x,s)
\,ds 
$$
with $g[u](x,s) = \partial_x[ u(x-s)u(x)] - \partial_x[ u(x+s)u(x)]$. We consider  input functions $u$ to be random probability density functions 
$	u(x)= 1 + \sum_{n=1}^{n_u}
\sigma_n\,\zeta_n\,\cos\bigl(2\pi n\,x\bigr), 
$
where $\{\zeta_n\}_{n\ge1}$ are i.i.d.\ random signs {\rm (i.e., $\P(\zeta_n=\pm1)=\tfrac12$)}, and $\sigma_n>0$ with $\sum_n n \sigma_n<1$. 
\end{example}

\subsection{Main results: automatic reproducing kernel and regularization}
\paragraph{Challenges in learning kernels.}
Learning convolution kernels from discrete, noisy observations is a severely ill-posed inverse problem: even small data perturbations can induce large estimation errors, making regularization indispensable. Moreover, with scant prior knowledge of the true kernel, a nonparametric framework is necessary, rendering the choice of regularization norm both critical and nontrivial.

Kernel methods are particularly suitable for such inverse problems, as they can non-parametrically approximate the unknown functions with regularization using reproducing kernel Hilbert spaces (RKHS). 
Hence, they have been widely used in machine learning and inverse problems, dating back from solving the Fredholm equations in \cite{wahba1977practical,wahba1973convergence} and functional linear regression \cite{wahba1990spline,yuan_cai2010} to the recent studies on learning dynamical systems \cite{darcy2025learning,feng2024learning}, one-shot stochastic differential equations \cite{darcy2023oneshot}, linear responses estimations \cite{zhang2020estimating}, and solvers for nonlinear PDEs \cite{chen2021solving} and PDEs on manifolds \cite{harlim2020kernel}, to name just a few. In particular, the representer theorem reduces the problem with finite data to a finite-dimensional form, enabling efficient computation and feature extraction. 

However, a major obstacle in kernel methods is the choice of the reproducing kernel. Standard options, such as Gaussian or Mat\'ern kernels, come with hyperparameters (e.g., bandwidth or smoothness order) that must be carefully tuned. This process is not only computationally expensive but also fails to exploit the specific structure of the inverse problem at hand. In particular, when learning kernels in operators, the variational normal operator may be rank-deficient or possess zero eigenvalues, rendering conventional kernel selection and hyperparameter tuning virtually intractable.
  
\paragraph{Main results.} 
To overcome this obstacle, we propose an \emph{automatic reproducing kernel} that is defined directly in terms of the data and the forward operator. By incorporating the normal operator from the variational formulation, our kernel automatically adapts to the geometry and spectral properties of the inverse problem.  The resulting data-adaptive (DA) RKHS has a closure that is the space in which we can identify the true convolution kernel. Moreover, we use the representer theorem to derive a set of \emph{automatic basis functions} that are adaptive to the finite discrete observations and are sufficient to represent the estimators in the DA-RKHS, including the minimal-norm least-squares, Tikhonov, and conjugate-gradient estimators. These basis functions make mesh-free regression possible and reveal the finite-dimensional nature of the seemingly infinite-dimensional inverse problem of deconvolution.     

Building on this theory, we develop two families of regularization algorithms for efficient implementations of the automatic reproducing kernel: 
\begin{itemize}
\item \emph{Tikhonov methods} based on matrix decomposition for small to medium datasets, with regularization parameters chosen via the L-curve or generalized cross-validation criteria.
\item \emph{Iterative and hybrid regularization schemes} that rely solely on matrix-vector products, which are scalable for large datasets. 
\end{itemize}

\paragraph{Notations.} Throughout, we use roman letters (e.g., $f,u,G$) and Greek letters (e.g., $\phi,\xi,\lambda$) to denote functions or scalars, with their meanings clear from context. Boldface symbols (e.g., $\mathbf{G},\mathbf{c},\mathbf{x}$) denote vectors or matrices; we write $\mathbf{c}=(c_i)$ or $\mathbf{c}(i)$ for its $i$-th component. We reserve $0$ for the zero function and $\mathbf{0}$ for the zero vector, and denote by $\mathbf{I}_k$ the $k\times k$ identity matrix. For a closed linear subspace $\mathcal{H}$, $P_{\mathcal{H}}$ is the orthogonal projection. Given any linear operator or matrix, $\mathcal{N}(\cdot)$ and $\mathcal{R}(\cdot)$ are its null and range spaces, respectively. Finally, for a bounded linear operator $T$ between Hilbert spaces, $T^*$ denotes its adjoint.

The structure of the paper is as follows. In \Cref{sec2}, we introduce the automatic reproducing kernel and automatic basis functions, and derive regularized estimators based on Tikhonov and iterative regularization methods. In \Cref{sec:discreteFn} and \Cref{sec4}, we propose practical algorithms for computing the estimators, including the approximations from discrete data and Tikhonov and iterative regularization algorithms for small and large datasets, respectively. We use three typical examples to illustrate the accuracy and efficiency of our methods in \Cref{sec5}. The conclusion is in \Cref{sec6}.

\section{Automatic reproducing kernel and basis functions}\label{sec2}
We first provide a brief review of reproducing-kernel methods for a variational formulation of the inverse problems. Leveraging the variational framework, we then introduce the automatic reproducing kernel. Next, we construct a finite set of automatic basis functions for regression and show that, despite the loss function being minimized over an infinite-dimensional function space, the minimizer actually resides in the finite-dimensional space spanned by these basis functions. 
In the next section, we build on this continuum analysis to develop practical discrete approximations, paving the way for efficient numerical implementation.

We make the following regularity assumption on data and the operator $R_\phi[u]$ in terms of the bivariate function $g[u]$ throughout this study. 
 \begin{assumption}\label{assumption}
  The functions $\{g[u_k]\}_{k=1}^{n_0}\subset L^2(\calX\times \calS)$ is uniformly bounded, i.e., $C_g:= \max_{1\leq k\leq n_0}\sup_{x\in \calX, s\in \calS}|g[u](x,s)| <\infty$.  
\end{assumption}

\subsection{Kernel methods for ill-posed variational inverse problems}\label{sec2.1}
We estimate the convolution kernel by a variational approach that minimizes the loss function over a hypothesis space $\mathcal{H}$: 
\begin{align} \label{eq:lossFn_empirical0} 
\widehat{\phi} = &\argmin{\phi\in\mathcal{H}} \mathcal{E}_\mathcal{D}(\phi),\quad  
\mathcal{E}_\mathcal{D}(\phi):= \frac{1}{n_0 }\sum_{k=1,j=1}^{n_0,J}| R_\phi[u_k](x_j)-f_k(x_j) | ^2 \Delta x,
\end{align}
The integral defining $R_\phi[u_k](x_j)$ requires semi-continuum data $\{g[u_k](x_j,s), s\in \calS\}_{k,j=1}^{n_0,J}$ that is discrete in $x$ and continuous in $s$, which in turn presumes access to the continuum data $u_k$. In practice, however, we only observe discrete data $u_k$ as in \eqref{eq:data} yielding the values to discrete $\{g[u_k](x_j,s_l); l=1,\dots,n_s\}_{k,j=1}^{n_0,J}$. In Section \ref{sec:discreteFn}, we use these discrete data to empirically approximate the integrals in ${R_\phi[u_k]}(x_j)$. 

Two preliminary tasks in this variational approach are to select a hypothesis space $\calH$ along with a representation of the function $\phi$, and select a penalty term for regularization, which is crucial for the deconvolution-type problem. 

Kernel methods achieve both tasks by selecting a reproducing kernel, which provides a reproducing kernel Hilbert space (RKHS) as the hypothesis space and provides an RKHS norm as the penalty term. Specifically, let $K$ be a reproducing kernel (a positive definite function on $\calS\times \calS$) and denote its RKHS by $H_K$. Each function in the RKHS can be represented by $\phi(s) = \sum_{l=1}^{n_s} c_l K(s_l,s) $, whose RKHS norm is $\|\phi\|_{H_K} =\sqrt{ \sum_{ij} c_i c_j K(s_i,s_j)}$. Here, the sample points $\{s_l\}_{l=1}^{n_s}$ must be properly chosen to extract enough features. Then, the minimizer of the quadratic loss function follows from solving the coefficients $\mathbf{c}=(c_1,\ldots, c_{n_s})$ via least squares with a penalty term depending on $\|\phi\|_{H_K}^2$. 

However, the choice of the reproducing kernel $K$ is a major challenge. The widely used reproducing kernels, such as Gaussian kernels, come with hyperparameters that must be carefully tuned along with the regularization strength parameter. This process is not only computationally expensive but also fails to leverage the specific structure of the forward operator $R_\phi[u]$. In particular, when the quadratic loss function is not strictly convex, the conventional kernel selection and hyperparameter tuning are challenging. 

We address this challenge in the next section by introducing an automatic reproducing kernel that is adaptive to the data and the forward operator.

\subsection{Automatic reproducing kernel and RKHS}\label{sec.2}
We first introduce a weighted function space $L^2_\rho(\calS)$, where the measure $\rho$ defined below quantifies the exploration of data to the unknown function through the functions $\{g[u_k](x,\cdot)\}_{k}$, hence it is referred to as an \emph{exploration measure}. 
 \begin{definition}\label{def:rho}
Given data satisfying {\rm Assumption \ref{assumption}}, let $\rho$ be a measure on $\calS$ with a density function with respect to the Lebesgue measure:
 \beq
 \label{eq:exp_measure}
 \dot\rho(s):=\frac{1}{n_0Z} \sum_{k=1}^{n_0} \int_\calX | g[u_k](x,s)| \nu(dx), \, \forall s\in \calS, 
 \eeq
 where $Z= \frac{1}{n_0} \sum_{k=1}^{n_0} \int_\calS \int_\calX |  g[u_k](x,s) | \nu(dx) ds$ is the normalization constant. 
\end{definition} 

The exploration measure plays the role of the probability measure $\rho_X$ in nonparametric regression of $f(x)= \E[Y\,\big| X=x]\in L^2_{\rho_X}(\calS)$ from data $\{(x_i,y_i)\}$ that are samples of the joint distribution $(X,Y)$ (see e.g., \cite{CuckerSmale02,Gyorfi06a}). Here, we use the $L^1$ norm of $g[u](\cdot,s)$; alternatively, one can also use the $ L^2$ norm, as in \cite{zhang2025minimax}, to relax the constraint on $g[u]$. It is particularly useful when treating singular kernels in nonlocal operators, which may not be square integrable with respect to the Lebesgue measure, but square integrable in $L^2_\rho$. For example, $\phi(s)= s^{-\alpha}\notin L^2([0,\delta])$ for $\alpha \in (\frac{1}{2},\frac{3}{2})$, but $\phi\in L^2_\rho(\calS)$ when $u_k\in C^2[0,1]$ with uniformly bounded second-order derivatives since  $\dot \rho(s)= O(s^2)$ for small $s$ since $g[u_k](x,s) = u_k(x+s)+u_k(x-s)-2 u_k(x)= u_k''(x)s^2/2 + o(s^2)$.

Next, we introduce the automatic reproducing kernel.  
\begin{definition}[Automatic reproducing kernel]
\label{def:Gbar}
The automatic reproducing kernel for estimating $\phi$ in the operator $R_\phi$ in \eqref{eq:R_phi_g} from data $\{g[u_k](x,\cdot)\}_{k}$ is the function $\Gbar:\calS\times \calS \to \R$ defined by 
  \begin{equation}  \label{eq:G-rho}
  \begin{aligned}
  \Gbar(s,s')&:= 
  \frac{G(s,s')}{\dot\rho(s)\dot\rho(s')}  \one_{\{\dot\rho(s)\dot\rho(s')>0\}}, \quad   G(s,s')&:= 	\frac{1}{n_0}\sum_{k=1}^{n_0} \int_\calX g[u_k](x,s)g[u_k](x,s') \nu(dx), 
  \end{aligned}
  \end{equation}
where $\dot\rho$ is defined in \eqref{eq:exp_measure}. 
\end{definition}

The next lemma shows that the automatic reproducing kernel comes from the quadratic term of the loss function $\calE_\mathcal{D}(\phi)$ in \eqref{eq:lossFn_empirical0}. 
In particular, the closure (in $L^2_\rho$) of its RKHS is the space in which the variational problem has a unique minimizer. Its proof is postponed to \Cref{sec:appd_A}. For notation simplicity, we  write $L^2_{\rho}(\calS)$ and $L^2_{\rho\otimes \rho}(\calS\times\calS)$ as $L^2_\rho$ and $L^2_{\rho\otimes \rho}$, respectively.
\begin{lemma}\label{thm:FSOI} 
Under Assumption {\rm\ref{assumption}}, the following statements hold true:
 \begin{itemize}
 \item[(a)]   $ \Gbar(s,s') $ is in $L^2_{\rho\otimes \rho}$ and symmetric, and the  operator $\LGbar: L^2_\rho \to L^2_\rho$ defined by  
 \beq
 \label{eq:LG}
 \LGbar\phi(s):=\int_{\calS} \phi(s')\Gbar(s,s')\rho(ds') 
 \eeq
  is compact, self-adjoint, and positive. Hence, its eigenvalues $\{\lambda_i\}_{i\geq 1}$ are nonnegative and its orthonormal eigenfunctions $\{ \psi_i\}_{i}$ form a complete basis of $L^2_\rho$.  
\item[(b)] The loss function $\calE_\mathcal{D}(\phi)$ in \eqref{eq:lossFn_empirical0} can be written as  
	 \begin{align}\label{eq:lossFn-sq}
	 \mathcal{E}_\mathcal{D}(\phi)
	 & = \innerp{\LGbar\phi,\phi}_{L^2_\rho} - 2\innerp{\phi^D,\phi}_{L^2_\rho}+ const.,
	 \end{align}
	 where $\phi^D$ comes from the Riesz representation, $\innerp{\phi^D,\phi}_{L^2_\rho}  =\frac{1}{n_0}\sum_{k=1}^{n_0}\innerp{R_\phi[u_k],f_k}_{\spaceY} $ for any $\phi\in L^2_\rho$. In particular, when the data is noiseless, $\phi^D= \LGbar \phi_*$ and the loss function $\calE_{\mathcal{D}}$ has a unique minimizer $\widehat{\phi} = \LGbar^{-1}\phi^D =P_H(\phi_*)$ in $H:=\overline{\mathrm{span}\{\psi_i\}_{i:\lambda_i>0}} = \mathcal{N}(\LGbar)^\perp \subset L^2_\rho$. 
\item[(c)] The RKHS of $\Gbar$ is $H_\Gbar:=\LGbar^{\frac{1}{2}}(L^2_\rho)$ with inner product 
	 $\innerp{\phi, \phi}_{H_\Gbar}=\innerp{\LGbar^{-\frac{1}{2}}\phi,\LGbar^{-\frac{1}{2}} \phi }_{L^2_\rho}. 
	 $ 
    We have $H=\overline{ H_\Gbar}$ with closure in $L^2_\rho$ and $\innerp{\phi,\LGbar\psi}_{H_\Gbar}= \innerp{\phi,\psi}_{L^2_\rho}$ for any $\phi\in H_\Gbar, \psi\in L^2_\rho$.  
	 \end{itemize}
\end{lemma}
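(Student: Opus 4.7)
The overall strategy is to (a) verify that $\overline{G}\in L^2_{\rho\otimes\rho}$ by a Cauchy--Schwarz-style bound that exploits how $\dot\rho$ is built from $|g[u_k]|$, so that $\LGbar$ is Hilbert--Schmidt and standard spectral theory applies; (b) expand the quadratic loss term by term and identify the quadratic piece with $\innerp{\LGbar\phi,\phi}_{L^2_\rho}$ and the linear piece with $\innerp{\phi^D,\phi}_{L^2_\rho}$ through a Riesz representation argument in $L^2_\rho$; and (c) derive the RKHS structure from the spectral decomposition of $\LGbar$ via the standard Moore--Aronszajn--Mercer route.

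For part (a), symmetry of $\overline{G}$ is inherited from the symmetry of $G$, which is manifest in the definition. To show $\overline{G}\in L^2_{\rho\otimes\rho}$, I plan to observe that by Cauchy--Schwarz in the integral over $\calX$ defining $G$, one has $G(s,s')^2\le G(s,s)G(s',s')$, and then use the uniform bound from Assumption \ref{assumption} to estimate $G(s,s)\le C_g\cdot \frac{1}{n_0}\sum_k\int_\calX |g[u_k](x,s)|\nu(dx)=C_g Z\,\dot\rho(s)$. This gives $|\overline{G}(s,s')|^2\dot\rho(s)\dot\rho(s')\le (C_gZ)^2$ on the support of $\dot\rho\otimes\dot\rho$, hence $\overline{G}\in L^2_{\rho\otimes\rho}$. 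Compactness and self-adjointness of $\LGbar$ then follow from Hilbert--Schmidt theory; positivity I would verify by the direct computation
\[
\innerp{\LGbar\phi,\phi}_{L^2_\rho}=\int_\calS\!\int_\calS \phi(s)\phi(s')G(s,s')\,ds\,ds'=\frac{1}{n_0}\sum_{k=1}^{n_0}\int_\calX\!\Bigl(\int_\calS \phi(s)g[u_k](x,s)\,ds\Bigr)^{\!2}\nu(dx)\ge 0,
\]
after canceling the two $\dot\rho$ factors in $\overline{G}$ against $\rho\otimes\rho$. The spectral statement is then the classical theorem for compact self-adjoint positive operators on a separable Hilbert space.

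For part (b), I will expand $|R_\phi[u_k](x_j)-f_k(x_j)|^2$ in $\calE_{\mathcal{D}}$. The quadratic piece, after summing over $k,j$ and rewriting the $j$-sum as an integral against $\nu$, becomes $\frac{1}{n_0}\sum_k\int_\calX R_\phi[u_k](x)^2\nu(dx)$; substituting the definition of $R_\phi[u_k]$ produces exactly the double integral identified above with $\innerp{\LGbar\phi,\phi}_{L^2_\rho}$. The cross term is a bounded linear functional of $\phi$ on $L^2_\rho$ (boundedness follows from Assumption \ref{assumption} and the discreteness of $\nu$), so Riesz representation yields a unique $\phi^D\in L^2_\rho$ with the stated pairing identity, and $\|f_k\|_\spaceY^2$ contributes the constant. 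For the noiseless claim, substitute $f_k=R_{\phi_*}[u_k]$ into the Riesz identity to get $\innerp{\phi^D,\phi}_{L^2_\rho}=\innerp{\LGbar\phi_*,\phi}_{L^2_\rho}$ for all $\phi$, hence $\phi^D=\LGbar\phi_*$; the loss then equals $\|\LGbar^{1/2}(\phi-\phi_*)\|_{L^2_\rho}^2+const.$, whose minimizers over $L^2_\rho$ form the affine set $\phi_*+\mathcal{N}(\LGbar)$, and the unique element of this set that lies in $H=\mathcal{N}(\LGbar)^\perp$ is $P_H\phi_*=\LGbar^{-1}\phi^D$ (with $\LGbar^{-1}$ interpreted via its restriction to $H$).

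For part (c), using the spectral decomposition $\LGbar=\sum_i\lambda_i\psi_i\otimes\psi_i$ from (a), I would define $H_{\overline{G}}:=\LGbar^{1/2}(L^2_\rho)=\{\sum_i a_i\lambda_i^{1/2}\psi_i:\sum a_i^2<\infty\}$ with the inner product making $\LGbar^{1/2}:H\to H_{\overline{G}}$ an isometry, verify the reproducing property $\innerp{\phi,\overline{G}(\cdot,s')}_{H_{\overline{G}}}=\phi(s')$ by expanding both sides in the basis $\{\psi_i\}$, and read off $\overline{H_{\overline{G}}}=H$ from the spectral expansion. The identity $\innerp{\phi,\LGbar\psi}_{H_{\overline{G}}}=\innerp{\phi,\psi}_{L^2_\rho}$ is then immediate from $\innerp{\LGbar^{1/2}\alpha,\LGbar^{1/2}\beta}_{H_{\overline{G}}}=\innerp{\alpha,\beta}_{L^2_\rho}$ applied with $\beta=\LGbar^{1/2}\psi$.

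The main obstacle I anticipate is the careful bookkeeping around the density $\dot\rho$ and the indicator $\one_{\{\dot\rho(s)\dot\rho(s')>0\}}$: one must consistently interpret $L^2_\rho$ as the space of functions on $\mathrm{supp}(\rho)$, check that the Riesz representer $\phi^D$ lives in $H$ and not merely in $L^2_\rho$, and handle the pseudoinverse $\LGbar^{-1}$ restricted to its range. Everything else reduces to routine Hilbert--Schmidt and spectral computations.
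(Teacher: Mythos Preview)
Your proposal is correct and follows essentially the same route as the paper's proof. The only minor variation is in part~(a): you bound $G(s,s')^2$ via Cauchy--Schwarz as $G(s,s')^2\le G(s,s)G(s',s')$ and then estimate the diagonal, whereas the paper bounds $|G(s,s')|$ directly by replacing one factor $|g[u_k](x,s')|\le C_g$ to obtain $|G(s,s')|\lesssim C_g\dot\rho(s)$ and, by symmetry, $|G(s,s')|\lesssim C_g\min\{\dot\rho(s),\dot\rho(s')\}$; both routes yield the same pointwise estimate $G(s,s')^2\lesssim \dot\rho(s)\dot\rho(s')$ and hence the same $L^2_{\rho\otimes\rho}$ bound. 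Parts~(b) and~(c) match the paper's arguments (expand the square, Riesz representation, then spectral description of the RKHS and the eigenbasis computation for $\innerp{\phi,\LGbar\psi}_{H_{\overline G}}=\innerp{\phi,\psi}_{L^2_\rho}$), with your write-up simply supplying more detail than the paper's terse appendix proof.
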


Therefore, solving $\nabla \mathcal{E}(\phi)=2(\LGbar\phi-\phi^D)=0$ yields the formal solution $\LGbar^{-1}\phi^D$. This inverse exists in $H$ when $\phi^D \in \LGbar(L_{\rho}^2)$ (and is undefined otherwise). Because $\LGbar$ is compact and may even be rank-deficient, the variational inverse problem is ill-posed, making regularization essential for a stable and accurate approximation of the true $\phi$. The RKHS $H_\Gbar$ provides a natural regularization space: its closure is $H$, and it automatically filters out any component of $\phi^D$ not in $\LGbar(L_{\rho}^2)$, which arises solely from noise or model error (see \cite[Theorem 2.7]{chada2024data} for a decomposition of $\phi^D$).

\subsection{Automatic basis functions and Tikhonov regularization}\label{sec2.3}
In the RKHS $H_\Gbar$, we seek a regularized solution by minimizing 
 \begin{equation}\label{regu1}
  \min_{\phi \in H_{\Gbar} } \calE_\lambda(\phi) : =  \mathcal{E}_\mathcal{D}(\phi)+\lambda \norm{\phi}_{H_\Gbar}^2, 
 \end{equation}
where $\mathcal{E}_\mathcal{D}$ is given in \eqref{eq:lossFn_empirical0}. In practice, one must choose a finite set of basis functions to represent elements of $H_\Gbar$. A common practice is to use the reproducing kernel to set a hypothesis space $\calH= \mathspan{\Gbar(s_j,\cdot)}_{j=1}^{n_s} \subset H_\Gbar \subset L^2_\rho$, where the $\{s_j\}_{j=1}^{n_s}\subset \calS$ are sample points. However, this can introduce bias and may fail to capture key features of the underlying inverse problem (see Remark \ref{rmk:basis-G}).  

To overcome these limitations, we construct a finite collection of \emph{automatic basis functions} tailored to the semi-continuum observations $\{g[u_k](x_j,\cdot)\}_{k,j=1}^{n_0,J}$. In particular, we show that even though the minimization of the loss function is taken over an infinite-dimensional space, the minimizer actually lies within the finite-dimensional span of these automatic bases.  This result extends the classical finite-dimensional representer theorem for smoothing spline in \cite[Theorem 1.3.1]{wahba1990spline} to our data-adaptive setting. 
 
\begin{theorem}[Finite-dimensional representer]\label{thm:Finite-rep}
Given functions $\{g[u_k](x_j,\cdot)\}_{k,j=1}^{n_0,J}$, let 
\begin{equation}\label{eq:auto-basis}
\xi_{kj}(s) = \LGbar [\frac{g[u_k](x_j,\cdot)}{\dot\rho(\cdot)}](s) =  \int_\calS \Gbar(s,s') g[u_k](x_j,s')	ds'
\end{equation}
for each $k,j$. Then, $\xi_{kj}\in H_{\Gbar}$ and  $\innerp{\xi_{kj},\phi}_{H_{\Gbar}}=R_{\phi}[u_k](x_j)$.  Let 
\begin{equation}\label{eq:Sigma-f}
\begin{aligned}
\bSigma&=( \innerp{\xi_{kj},\xi_{k'j'}}_{H_{\Gbar}}) \in \R^{n_0J\times n_0J}, \quad \mathbf{f} = (f_k(x_j))\in\R^{n_0J} . 
\end{aligned}
\end{equation}
We have finite-dimensional representations for the estimators in \eqref{eq:lossFn_empirical0} and \eqref{regu1} as follows.  
\begin{itemize}
\item[(a)]  The least squares estimator with minimal $H_{\Gbar}$-norm is 
    \begin{equation}\label{eq:lse-mini}
        \widetilde{\phi} =   \argmin{\psi\in \argmin{\phi\in H_{\Gbar}} \calE_{\mathcal{D}}(\phi)}\|\psi\|_{H_{\Gbar}}^2   = \sum_{kj} \widetilde c_{kj}\xi_{kj} , \ \text{ with } 
        (\widetilde{c}_{kj}) =:\widetilde{\mathbf{c}} = \bSigma^\dag \mathbf{f}, 
    \end{equation}
    where $\widetilde{\bc}$ is the minimal 2-norm solution of $\min_{\mathbf{c}}\{\frac{1}{n_0J}\|\bSigma\mathbf{c}-\mathbf{f}\|_{2}^{2}\}$. 
\item[(b)]  The estimator of Tikhonov regularization with $H_{\Gbar}$-norm and $\lambda>0$ is  
\begin{equation}\label{eq:tik-estimator}
	  \widehat \phi_\lambda = \argmin{\phi \in H_{\Gbar} } \calE_\lambda(\phi) = \sum_{kj} \widehat c_{kj}\xi_{kj}, \quad \widehat{\mathbf{c}}_\lambda = 	( \bSigma^2 + n_0J \lambda \bSigma)^{\dagger} \bSigma \mathbf{f}, 
\end{equation} 
where the coefficient $\widehat{\mathbf{c}}_\lambda = (\widehat c_{kj})\in \R^{n_0J} $ solves  $\min_{\mathbf{c}}\{\frac{1}{n_0J}\|\bSigma\mathbf{c}-\mathbf{f}\|_{2}^{2}+\lambda \mathbf{c}^\top \bSigma  \mathbf{c}\} $. 
\end{itemize}    
\end{theorem}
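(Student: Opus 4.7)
My plan is to first establish an evaluation-type reproducing identity for each $\xi_{kj}$, then apply an orthogonal-projection (representer) argument to reduce the minimization to the finite-dimensional span $V=\mathrm{span}\{\xi_{kj}\}$, and finally convert the resulting finite systems into the claimed pseudoinverse formulas. The only delicate point I foresee is reconciling the minimum $H_\Gbar$-norm condition on $\tilde\phi$ in part (a) with the minimum-2-norm condition on $\tilde{\mathbf{c}}$, which I flag as the main obstacle below.

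\textbf{Step 1 (reproducing identity).} First I would note that under Assumption \ref{assumption} and the indicator convention in \eqref{eq:G-rho}, the function $g[u_k](x_j,\cdot)/\dot\rho$ lies in $L^2_\rho$ on the support of $\dot\rho$, so by Lemma \ref{thm:FSOI}(c), $\xi_{kj}=\LGbar[g[u_k](x_j,\cdot)/\dot\rho]\in \LGbar(L^2_\rho)\subset \LGbar^{1/2}(L^2_\rho)=H_\Gbar$. I would then apply the duality $\innerp{\phi,\LGbar\psi}_{H_\Gbar}=\innerp{\phi,\psi}_{L^2_\rho}$ from Lemma \ref{thm:FSOI}(c) with $\psi=g[u_k](x_j,\cdot)/\dot\rho$: the factor $\dot\rho$ cancels against the density of $\rho$, yielding $\innerp{\phi,\xi_{kj}}_{H_\Gbar}=\int_\calS \phi(s)\,g[u_k](x_j,s)\,ds=R_\phi[u_k](x_j)$.

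\textbf{Step 2 (reduction to $V$ and finite systems).} For any $\phi\in H_\Gbar$, I would orthogonally decompose $\phi=\phi_V+\phi_{V^\perp}$ in $H_\Gbar$ (here $V$ is finite-dimensional hence closed). The reproducing identity gives $R_\phi[u_k](x_j)=\innerp{\xi_{kj},\phi_V}_{H_\Gbar}$, so $\mathcal{E}_\mathcal{D}(\phi)=\mathcal{E}_\mathcal{D}(\phi_V)$; combined with $\|\phi\|_{H_\Gbar}^2=\|\phi_V\|_{H_\Gbar}^2+\|\phi_{V^\perp}\|_{H_\Gbar}^2$, this forces both the minimum-norm LSE and the Tikhonov minimizer into $V$. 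Parameterizing $\phi=\sum_{kj}c_{kj}\xi_{kj}$ and invoking the reproducing identity once more gives $R_\phi[u_k](x_j)=(\bSigma\mathbf{c})_{kj}$ and $\|\phi\|_{H_\Gbar}^2=\mathbf{c}^\top\bSigma\mathbf{c}$. Since $\Delta x=1/J$, the loss becomes $\mathcal{E}_\mathcal{D}(\phi)=\tfrac{1}{n_0J}\|\bSigma\mathbf{c}-\mathbf{f}\|_2^2$ and $\mathcal{E}_\lambda(\phi)=\mathcal{E}_\mathcal{D}(\phi)+\lambda\,\mathbf{c}^\top\bSigma\mathbf{c}$, matching the finite-dimensional problems in the statement.

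\textbf{Step 3 (pseudoinverse formulas and main obstacle).} For (b), setting $\nabla\mathcal{E}_\lambda=\mathbf{0}$ yields the normal equation $(\bSigma^2+n_0J\lambda\bSigma)\mathbf{c}=\bSigma\mathbf{f}$, whose minimum-2-norm solution is $\widehat{\mathbf{c}}_\lambda=(\bSigma^2+n_0J\lambda\bSigma)^\dagger\bSigma\mathbf{f}$, producing the unique Tikhonov minimizer in $V$. For (a), the hard part will be reconciling the definition of $\tilde\phi$ via minimum $H_\Gbar$-norm with the assertion that $\tilde{\mathbf{c}}=\bSigma^\dagger\mathbf{f}$ is minimum-2-norm; these are a priori different selection criteria. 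The key reconciling observation I plan to use is that any $\mathbf{c}_0\in\mathcal{N}(\bSigma)$ satisfies $\|\sum_{kj}c_{0,kj}\xi_{kj}\|_{H_\Gbar}^2=\mathbf{c}_0^\top\bSigma\mathbf{c}_0=0$ because $\bSigma$ is PSD; hence $\mathcal{N}(\bSigma)$ parameterizes coefficient redundancies that represent the zero element of $V$. Consequently every solution of $\min_{\mathbf{c}}\|\bSigma\mathbf{c}-\mathbf{f}\|_2^2$ yields the identical function in $V$, and picking the canonical 2-norm minimal $\bSigma^\dagger\mathbf{f}$ gives a valid coefficient representation of the unique minimum-$H_\Gbar$-norm LSE $\tilde\phi$.
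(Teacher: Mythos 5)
Your proposal is correct and follows essentially the same route as the paper: the reproducing identity $\innerp{\xi_{kj},\phi}_{H_\Gbar}=R_\phi[u_k](x_j)$ via the duality $\innerp{\phi,\LGbar\psi}_{H_\Gbar}=\innerp{\phi,\psi}_{L^2_\rho}$ of Lemma \ref{thm:FSOI}(c), the orthogonal decomposition of $\phi$ relative to $\mathrm{span}\{\xi_{kj}\}$ to reduce both problems to finite-dimensional quadratic minimizations, and the normal-equation/pseudoinverse formulas. The "obstacle" you flag in part (a) is resolved exactly as in the paper (its Remark \ref{unq_phi}): coefficients differing by an element of $\mathcal{N}(\bSigma)$ represent the same function since $\bSigma$ is the Gram matrix, so the canonical choice $\bSigma^\dagger\mathbf{f}$ is a valid representation of the unique minimal-$H_\Gbar$-norm minimizer.
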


\begin{proof}
	First, since $\tilde g_{kj}:=\frac{g[u_k](x_j,s')}{\dot\rho(s')}\in L^2_\rho$, we have $\xi_{kj}= \LGbar \tilde  g_{kj} \in H_{\Gbar} $. 
	
	Next, note that for every $\phi\in H_{\Gbar}$, Lemma \ref{thm:FSOI}(c) implies that $\innerp{\LGbar\psi,\phi}_{H_{\Gbar}} = \innerp{\psi,\phi}_{L^2_\rho}$ for any $\psi\in L^2_\rho$. Hence, 
	\begin{equation*}
   \innerp{\xi_{kj}, \phi}_{H_{\Gbar}} = \innerp{\LGbar \tilde  g_{kj}, \phi}_{H_{\Gbar}} =  \innerp{ \tilde  g_{kj}, \phi}_{L^2_\rho} = R_\phi[u_k](x_j). 		
	\end{equation*}
In other words, $\xi_{kj}$ is a representer of the bounded linear functional $R_\phi[u_k](x_j)$ on $H_{\Gbar}$. 

Also, for any $\phi\in H_{\Gbar}$, we can write it as 
\[\phi= \xi + \sum_{kj} c_{kj}\xi_{kj}, \quad \xi \perp \mathrm{span}\{\xi_{kj}\}_{k,j=1}^{n_0,J}. 
\]
Then, we have $\|\phi\|_{H_{\Gbar}}^2 =\mathbf{c}^\top \bSigma  \mathbf{c} +\|\xi\|_{H_{\Gbar}}^2 $ and 
$\innerp{\xi_{kj}, \phi}_{H_{\Gbar}} = ( \bSigma \mathbf{c})_{kj}$.  
As a result, the loss functions $\mathcal{E}_\mathcal{D}(\phi)$ and $\mathcal{E}_\lambda(\phi)$ can be written as 
\begin{align*}
	 \mathcal{E}_\mathcal{D}(\phi) & = \frac{1}{n_0J}\sum_{kj} |f_k(x_j)- \innerp{\xi_{kj}, \phi}_{H_{\Gbar}}|^2 = \frac{1}{n_0J}\|\bSigma\mathbf{c}-\mathbf{f}\|_{2}^{2};  \\
	\mathcal{E}_\lambda(\phi)  & = \frac{1}{n_0J}\|\bSigma\mathbf{c}-\mathbf{f}\|_{2}^{2}+\lambda \mathbf{c}^\top \bSigma  \mathbf{c}  + \lambda\|\xi\|_{H_{\Gbar}}^2.
 \end{align*} 

Therefore, the minimizer $ \widetilde{\phi}$ of $\mathcal{E}_\mathcal{D}(\phi) = \frac{1}{n_0J}\|\bSigma\mathbf{c}-\mathbf{f}\|_{2}^{2}$ with minimal $H_\Gbar$-norm in \eqref{eq:lse-mini} has a coefficient $\widetilde{\bc}$ that can be solved with by the minimizer of $\frac{1}{n_0J}\|\bSigma\mathbf{c}-\mathbf{f}\|_{2}^{2}$ with minimal $\|\mathbf{c}\|_2$.  

Also, the minimizer of $\mathcal{E}_\lambda(\phi)$ solves $(\frac{1}{n_0J} \bSigma^2  + \lambda \bSigma) \mathbf{c} =\bSigma  \mathbf{f} $, which gives \eqref{eq:tik-estimator}. 
\end{proof}

Note that the matrix $\bSigma$ in \eqref{eq:Sigma-f} can be either singular or non-singular. If it is non-singular, the Tikhonov regularized estimator in \eqref{eq:tik-estimator} becomes $\mathbf{c}_{ridge} = (\bSigma + n_0J \lambda I )^{-1}\mathbf{f}$ after canceling out $\bSigma$, which is the widely used ridge regularized estimator. However, when $\bSigma$ is singular, this Tikhonov regularized estimator is different from the ridge estimator. It is $\mathbf{c}_\lambda= (\bSigma^2+ n_0J \lambda \bSigma )^{\dag}\bSigma \mathbf{f} = (\bSigma+ n_0J \lambda I )^{-1}P_{\mathcal{N}(\bSigma)^\perp} \mathbf{f} $, which prevents the error in $P_{\calN(\bSigma)}\mathbf{f}$ from contaminating the estimator. In contrast, the ridge regularized estimator will be contaminated by the error $P_{\calN(\bSigma)}\mathbf{f}$ and would lead to disastrous results in the small noise limit \cite{LangLu23small,chada2024data}. Additionally, in either case, our Tikhonov solution in \eqref{eq:tik-estimator} converges to the least squares solution with the minimal norm as $\lambda\to 0$. 

Importantly, a singular $\bSigma$ does not imply multiple minimizers for the loss function over the function space, though it leads to multiple minimizers in the coefficient space. As the next remark shows, all the coefficient minimizers correspond to the same function minimizer because when $\bSigma$ is singular, the basis functions $\{\xi_{kj}\}$ are linearly dependent. In short, the loss function $\mathcal{E}_\mathcal{D}$ always has a unique minimizer in $H_{\Gbar}$ regardless of $\bSigma$ being singular or not.

\begin{remark}\label{unq_phi}
When $\bSigma$ is singular, there are infinitely many $\bc$ minimizing $\frac{1}{n_0J}\|\bSigma\mathbf{c}-\mathbf{f}\|_{2}^{2}$, but all such minimizers lead to the unique minimizer in $H_{\Gbar}$. Equivalently, the set $\{\phi=\sum_{kj}c_{kj}\xi_{kj}: \mathbf{c}=(c_{kj})\in\mathcal{C}\}$ contains only one element, where $\mathcal{C}=\widetilde{\mathbf{c}}+\mathcal{N}(\bSigma)$ is the set of all minimizers of $\|\bSigma\mathbf{c}-\mathbf{f}\|_{2}^{2}$. To see it, let $\phi_1$ and $\phi_2$ be such two elements with corresponding $\mathbf{c}_1, \mathbf{c}_2\in\mathcal{C}$. Then $\mathbf{c}_1-\mathbf{c}_2\in\mathcal{N}(\bSigma)$ and $\phi_1-\phi_2=\sum_{kj}(\mathbf{c}_1-\mathbf{c}_2)(kj)\xi_{kj}$. Therefore, $\|\phi_1-\phi_2\|_{H_{\Gbar}}^2 =(\mathbf{c}_1-\mathbf{c}_2)^\top \bSigma (\mathbf{c}_1-\mathbf{c}_2)=0 $, leading to $\phi_1=\phi_2$. The same conclusion holds for the regularized loss $\frac{1}{n_0J}\|\bSigma\mathbf{c}-\mathbf{f}\|_{2}^{2}+\lambda\mathbf{c}^{\top}\bSigma\mathbf{c}$. In other words, when $\bSigma$ is singular, the basis functions $\{\xi_{kj}\}$ are linearly dependent, so there are multiple coefficients that represent the same function minimizer. 
\end{remark}

\begin{remark}[Basis functions via the reproducing kernel.]\label{rmk:basis-G}
A default approach to solve \eqref{regu1} is to use basis functions of the reproducing kernel $\Gbar$ since  $H_{\Gbar}=\overline{\mathrm{span}\{\Gbar(s,\cdot)\}_{s\in \calS}}$. That is, take sample points $\{s_l\}_{l=1}^{n_s}\subset \calS$ and set hypothesis space to be $\calH= \mathspan{\Gbar(s_l,\cdot)}_{l=1}^{n_s}$. For any $ \phi(s)=\sum_{l=1}^{n_s} a_l \Gbar(s_l,s)\in \calH$, the square of its RKHS norm is  
 \begin{equation}\label{eq:normRKHS-a}
 \begin{aligned}
 \|\phi\|_{H_\Gbar}^2 &= \|\sum_{l=1}^{n_s} a_l \Gbar(s_l,\cdot)\|_{H_\Gbar}^2= 
 \sum_{l,l'=1}^{n_s} a_l a_{l'} \innerp{\Gbar(s_l,\cdot),\Gbar(s_l,\cdot)}_{H_\Gbar} = \mathbf{a}^\top \overline{\bG} \ba, \\
 \overline{\bG} (l,l') &= \innerp{\Gbar(s_l,\cdot),\Gbar(s_l,\cdot)}_{H_\Gbar} = \Gbar(s_l,s_{l'}), \quad  1\leq l,l'\leq {n_s}.   	
 \end{aligned}
 \end{equation}
 Then, the minimizer of $ \mathcal{E}_\mathcal{D}(\phi)+\lambda \norm{\phi}_{H_\Gbar}^2 = \ba^\top \bA \ba - 2\ba^\top \bb + Const +\lambda  \mathbf{a}^\top \overline{\bG} \ba$ is 
 \begin{equation}\label{eq:regu_loss_a}
 \begin{aligned}
\widehat \ba_\lambda & = (\bA+ \lambda \overline{\bG})^{\dagger}\bb,   \quad \text{ with }
 \\
 \bA(l,l') & = \innerp{\LGbar \Gbar(s_l,\cdot),\Gbar(s_{l'},\cdot)}_{L^2_\rho} = \int_\calS \int_\calS \Gbar(s_l,s)\Gbar(s_{l'},s') G(s,s') ds ds', \\ 
 \bb(l) &= \frac{1}{n_0} \sum_{k=1}^{n_0}\innerp{R_{\Gbar(s_l,\cdot)}[u_k],f}_{\spaceY} = \int_\calS \Gbar(s_l,s) \frac{1}{n_0} \sum_{k=1}^{n_0}  \int_\calX g[u_k](x,s) f_k(x) \nu(dx) ds. 
 \end{aligned}
 \end{equation}
 The major difficulty is selecting the sample points $\{s_j\}$ such that the resulting basis functions capture all the features in the data. In practice, this only succeeds when $\{s_j\}$ align exactly with the $x$-mesh, at which point the estimator coincides with the automatic basis estimator. By contrast, the automatic basis functions in Theorem {\rm\ref{thm:Finite-rep}} are guaranteed to extract all the features available in the data. Therefore, throughout this study, we employ the automatic basis functions.  
 \end{remark}

\subsection{Conjugate gradient and iterative regularization}\label{sec2.4}
Iterative regularization methods circumvent the computationally expensive matrix inversions or decompositions for Tikhonov regularization in \eqref{eq:tik-estimator} by minimizing the loss function on a sequence of growing subspaces with early stopping; see, e.g., \cite[Ch.~3.3]{engl1996regularization}. 
These subspaces are designed to progressively capture the dominant features of the true solution, and early stopping prevents the inclusion of noise-dominated directions.

To design iterative regularization methods adapted to the automatic basis functions, we can apply the conjugate gradient (CG) method (see, e.g., \cite[Ch. 7]{engl1996regularization}) to the normal equation of the least squares problem
\begin{equation}\label{LSE0}
  \argmin{\phi \in H_{\Gbar}} n_0J\calE_\mathcal{D}(\phi)= \|T\phi-\mathbf{f}\|_{2}^2, 
\end{equation}
where $T$ is the linear operator
\begin{equation}
  T: H_{\Gbar}\rightarrow (\mathbb{R}^{n_0J}, \langle \cdot, \cdot \rangle_{2}), \quad
  \phi \mapsto (\langle \xi_{kj},\phi\rangle_{H_{\Gbar}}) .
\end{equation}
At each iteration, the CG method selects a new search direction that is conjugate with respect to the normal operator $T^*T$ to all previous directions. In particular, CG is essentially a Krylov subspace method, where the solution to \eqref{LSE0} in the $l$-th CG iteration with initial guess $\phi_0=0$ is 
  \begin{equation}\label{eq:cg_solu_fn}
       \phi_{l}=\argmin{\phi\in \mathcal{H}_l}\|T\phi-\mathbf{f}\|_{2},  \quad \mathcal{H}_{l}:=\mathrm{span}\{(T^{*}T)^{i}T^{*}\mathbf{f}\}_{i=0}^{l-1}.
    \end{equation}
Each $\mathcal{H}_l$ is a subspace of $H_\Gbar$ and we call it the $l$-th \emph{RKHS-Krylov subspace}.

The following theorem shows the implementation of the above CG iteration in the coefficient space of the automatic basis functions. 
\begin{theorem}[Conjugate gradient solutions]\label{prop:representorIR}
At the $l$-th iteration, the CG solution in \eqref{eq:cg_solu_fn} is $\phi_{l}=\pi(\bc)=\sum_{kj}\bc_l(kj)\xi_{kj}$ with 
\begin{equation}\label{LS_sub}
  \bc_l = \argmin{\bc \in \mathcal{K}_l} \|\bSigma \bc - \bf \|_2, \quad
  \mathcal{K}_l:= \mathrm{span}\{\bSigma^{i}\bSigma^{\dag}\mathbf{f}\}_{i=1}^{l},
\end{equation}
and $\calH_l=\pi(\mathcal{K}_l)= \mathrm{span}\{\sum_{kj}\mathbf{a}_i(kj)\xi_{kj}\}_{i=1}^l$, where  $ \mathbf{a}_{i}=\bSigma^{i}\bSigma^{\dag}\mathbf{f}$ and $\pi$ is the linear operator  
 \begin{equation}\label{embed_pi}
    \pi: \mathbb{R}^{n_0J}\rightarrow H_0: =\mathrm{span}\{\xi_{kj}\}_{k,j=1}^{n_0,J} \subset H_{\Gbar}, \quad \mathbf{c}\mapsto \sum_{kj}c_{kj}\xi_{kj}.
  \end{equation}
In particular, $T\phi=\bSigma\bc$ for any $\phi=\sum_{kj} c_{kj}\xi_{kj}+\xi$ with $\xi\in H_{0}^{\perp}$ and $\mathbf{c}=(c_{kj})\in\mathbb{R}^{n_0J}$, $T\circ \pi = \bSigma$ and $T^*=\pi\circ(\bSigma^\dag \bSigma)$, implying the following two commutative diagrams:
  \begin{equation}\label{diags}
    \begin{tikzcd}
      H_{\Gbar} \arrow[r, "T"] \arrow[from=d, "\pi"]
      & \mathbb{R}^{n_0J}  \\
      \mathbb{R}^{n_0J} \arrow[ur, "\bSigma"']
    \end{tikzcd}
    \quad  \quad \quad \quad
    \begin{tikzcd}
      H_{\Gbar}  \arrow[from=d, "\pi"]
      & \mathbb{R}^{n_0J} \arrow[l, "T^{*}"'] \arrow[dl, "\bSigma^{\dag}\bSigma"] \\
      \mathbb{R}^{n_0J} 
    \end{tikzcd} .
  \end{equation}
\end{theorem}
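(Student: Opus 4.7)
The proof naturally breaks into four steps: (i) establish the three operator identities $T\circ\pi=\bSigma$, $T^{*}=\pi\circ(\bSigma^{\dagger}\bSigma)$, and $T\phi=\bSigma\bc$ for any decomposition $\phi=\pi(\bc)+\xi$ with $\xi\in H_{0}^{\perp}$; (ii) identify the RKHS--Krylov subspace $\calH_{l}$ with $\pi(\mathcal{K}_{l})$; (iii) reduce the CG least-squares problem on $\calH_{l}$ to its coefficient-space counterpart on $\mathcal{K}_{l}$; (iv) note that the commutative diagrams are visual summaries of (i).

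For step (i), the key input is the representer property $\innerp{\xi_{kj},\phi}_{H_{\Gbar}}=R_{\phi}[u_{k}](x_{j})$ from Theorem \ref{thm:Finite-rep}. Applying this with $\phi=\pi(\bc)$ gives $(T\pi(\bc))_{kj}=\sum_{k'j'}c_{k'j'}\innerp{\xi_{kj},\xi_{k'j'}}_{H_{\Gbar}}=(\bSigma\bc)_{kj}$, so $T\circ\pi=\bSigma$. For general $\phi=\pi(\bc)+\xi$ with $\xi\perp H_{0}$ in $H_{\Gbar}$, linearity and $\xi_{kj}\in H_{0}$ yield $T\phi=T\pi(\bc)=\bSigma\bc$. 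To compute $T^{*}$, I would verify $\innerp{T\phi,\mathbf{v}}_{2}=\innerp{\phi,\pi(\bSigma^{\dagger}\bSigma\mathbf{v})}_{H_{\Gbar}}$ for all $\phi\in H_{\Gbar}$ and $\mathbf{v}\in\R^{n_{0}J}$: the left side equals $\bc^{\top}\bSigma\mathbf{v}$, and the right side collapses via $\pi(\bSigma^{\dagger}\bSigma\mathbf{v})\in H_{0}$ and $\innerp{\pi(\bc),\pi(\bd)}_{H_{\Gbar}}=\bc^{\top}\bSigma\bd$ to $\bc^{\top}\bSigma\bSigma^{\dagger}\bSigma\mathbf{v}=\bc^{\top}\bSigma\mathbf{v}$, using the defining identity $\bSigma\bSigma^{\dagger}\bSigma=\bSigma$ of the Moore--Penrose pseudoinverse. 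Uniqueness of the adjoint then gives $T^{*}=\pi\circ(\bSigma^{\dagger}\bSigma)$.

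For step (ii), combine the formulas from (i): since $\bSigma$ is symmetric positive semidefinite, $\bSigma\bSigma^{\dagger}=\bSigma^{\dagger}\bSigma=P_{\calR(\bSigma)}$ commutes with $\bSigma$, so $T^{*}T\,\pi(\bc)=\pi(\bSigma^{\dagger}\bSigma\cdot\bSigma\bc)=\pi(\bSigma\bc)$. An easy induction on $i$ then gives $(T^{*}T)^{i}T^{*}\mathbf{f}=\pi(\bSigma^{i}\bSigma^{\dagger}\bSigma\mathbf{f})=\pi(\bSigma^{i}\bSigma^{\dagger}\mathbf{f})$ (the last equality because $\bSigma^{i}\bSigma^{\dagger}\bSigma=\bSigma^{i}P_{\calR(\bSigma)}=\bSigma^{i}$ when $i\geq 1$, and for $i=0$ both sides are $\pi(\bSigma^{\dagger}\bSigma\mathbf{f})$). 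Consequently $\calH_{l}=\pi(\mathcal{K}_{l})$.

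For step (iii), since $\calH_{l}\subset H_{0}$, every $\phi\in\calH_{l}$ has the form $\pi(\bc)$ for some $\bc\in\mathcal{K}_{l}$, and (i) gives $\|T\phi-\mathbf{f}\|_{2}=\|\bSigma\bc-\mathbf{f}\|_{2}$. Hence the CG minimization \eqref{eq:cg_solu_fn} is equivalent to \eqref{LS_sub}. The coefficient representative is unambiguous because $\mathcal{K}_{l}\subset\calR(\bSigma)=\calN(\bSigma)^{\perp}=\calN(\pi)^{\perp}$ (using $\|\pi(\bc)\|_{H_{\Gbar}}^{2}=\bc^{\top}\bSigma\bc$), so $\pi$ restricted to $\mathcal{K}_{l}$ is injective. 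Finally, the two commutative diagrams are direct graphical restatements of $T\circ\pi=\bSigma$ and $T^{*}=\pi\circ(\bSigma^{\dagger}\bSigma)$, requiring no further argument. The main technical point to get right is bookkeeping with the pseudoinverse identities $\bSigma\bSigma^{\dagger}\bSigma=\bSigma$ and $\bSigma\bSigma^{\dagger}=\bSigma^{\dagger}\bSigma=P_{\calR(\bSigma)}$ when $\bSigma$ is singular; everything else is a routine unpacking of definitions.
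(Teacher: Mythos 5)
Your proposal follows essentially the same route as the paper's proof: establish $T\circ\pi=\bSigma$ and $T^{*}=\pi\circ(\bSigma^{\dagger}\bSigma)$, push powers of $T^{*}T$ through $\pi$ to identify $\calH_l$ with $\pi(\mathcal{K}_l)$, and reduce the least-squares problem on $\calH_l$ to its coefficient-space counterpart. Your treatment of $T^{*}$ (propose the formula and verify the adjoint identity directly, then invoke uniqueness of the adjoint) is a mild and legitimate variant of the paper's derivation, which instead decomposes $T^{*}\mathbf{y}$ into its $H_0$ and $H_0^{\perp}$ components and solves for the coefficients; your added observation that $\pi$ is injective on $\mathcal{K}_l\subset\calN(\bSigma)^{\perp}$ cleanly handles the non-uniqueness of coefficients that the paper delegates to Remark \ref{unq_phi}.

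One concrete slip in step (ii): the chain $(T^{*}T)^{i}T^{*}\mathbf{f}=\pi(\bSigma^{i}\bSigma^{\dagger}\bSigma\mathbf{f})=\pi(\bSigma^{i}\bSigma^{\dagger}\mathbf{f})$ is off by one power of $\bSigma$. Your own justification shows $\bSigma^{i}\bSigma^{\dagger}\bSigma=\bSigma^{i}$ for $i\ge 1$, and since $\bSigma^{i}=\bSigma^{i+1}\bSigma^{\dagger}$ the correct identity is $(T^{*}T)^{i}T^{*}\mathbf{f}=\pi(\bSigma^{i+1}\bSigma^{\dagger}\mathbf{f})$, which is what the paper states. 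Taken literally, your formula would give $\calH_l=\pi(\mathrm{span}\{\bSigma^{i}\bSigma^{\dagger}\mathbf{f}\}_{i=0}^{l-1})$, which contains $\pi(\bSigma^{\dagger}\mathbf{f})$ and is in general a different subspace from $\pi(\mathcal{K}_l)$; also, your parenthetical for $i=0$ is not right as stated, since $\bSigma^{0}\bSigma^{\dagger}\mathbf{f}=\bSigma^{\dagger}\mathbf{f}\neq\bSigma^{\dagger}\bSigma\mathbf{f}$ in general. Once the exponent is corrected, the indices $i=0,\dots,l-1$ produce exactly $\{\bSigma^{j}\bSigma^{\dagger}\mathbf{f}\}_{j=1}^{l}$, the conclusion $\calH_l=\pi(\mathcal{K}_l)$ follows, and the remainder of your argument goes through as written.
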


\begin{proof}
  The proof includes the following four steps.

   Step 1: Prove that $T\phi=\bSigma\bc$ for any $\phi=\sum_{kj} c_{kj}\xi_{kj}+\xi$ with $\xi\in H_{0}^{\perp}$ and $\mathbf{c}=(c_{kj})\in\mathbb{R}^{n_0J}$, which implies  $T\circ \pi = \bSigma$. By the definition of $T$, it follows that $T\xi_{kj}=(\langle \xi_{k'j'},\xi_{kj}\rangle_{H_{\Gbar}})$ with $1\leq k'\leq n_0$ and $1\leq j'\leq J$. Thus, recalling that $\bSigma=( \innerp{\xi_{kj},\xi_{k'j'}}_{H_{\Gbar}})$,  we have $T\phi=T(\sum_{kj} c_{kj}\xi_{kj})=\sum_{kj} c_{kj}T\xi_{kj}= \bSigma\mathbf{c} $. 
 
  Step 2: Show that $T^* = \pi \circ \bSigma^{\dag}\bSigma $. 
  It suffices to show that for any $\mathbf{y}\in\mathbb{R}^{n_0J}$, in the decomposition $T^{*}\mathbf{y}=\sum_{kj}a_{kj}\xi_{kj}+\bar{\xi}$ with $\bar{\xi}\in H_{0}^{\perp}$, we have $\bar{\xi}=0$ and $(a_{kj})=:\mathbf{a}=\bSigma^{\dag}\bSigma\mathbf{y}$.  By the adjoint identity $\langle T\phi, \mathbf{y} \rangle_{2}=\langle \phi, T^{*}\mathbf{y} \rangle_{H_{\Gbar}}$ for any $\phi=\sum_{kj} c_{kj}\xi_{kj}+\xi$, we have 
  \begin{align*}
    \langle \bSigma\bc, \mathbf{y} \rangle_{2}=\langle \sum_{kj} c_{kj}\xi_{kj}+\xi, \sum_{kj}a_{kj}\xi_{kj}+\bar{\xi} \rangle_{H_{\Gbar}} \ \ \Leftrightarrow \ \ 
    \mathbf{c}^{\top}\bSigma\mathbf{y} = \mathbf{c}^{\top}\bSigma\bar{\mathbf{a}} + \langle \xi, \bar{\xi}\rangle_{H_{\Gbar}} 
  \end{align*}
  for all $\bc\in \R^{n_0J}$ and $\xi\in H_0^\perp$. Thus, taking $\bc =\mathbf{0}$, we have $\langle \xi, \bar{\xi}\rangle_{H_{\Gbar}}=0$ for all $\xi\in H_0^\perp$. Combining with $\bar{\xi}\in H_{0}^\perp$, we get $\bar{\xi}=\mathbf{0}$. Then,  we have $\mathbf{c}^{\top}\bSigma\mathbf{y} = \mathbf{c}^{\top}\bSigma \mathbf{a}$ for all  $\bc\in \R^{n_0J}$, resulting in $\bSigma\mathbf{y}=\bSigma\mathbf{a}$, or equivalently, $\mathbf{a}\in\bSigma^{\dag}\bSigma\mathbf{y}+\mathcal{N}(\bSigma)$. But any two choices of $\mathbf{a}$ differing by an element of $\mathcal{N}(\bSigma)$ give the same $\phi\in H_\Gbar$ (see Remark \ref{unq_phi}). Therefore, we can take $\mathbf{a}=\bSigma^{\dag}\bSigma\mathbf{y}$.

  Step 3: Compute $(T^{*}T)^{i}T^{*}\mathbf{f}$.  Note that $T^{*}\circ\bSigma=(\pi\circ \bSigma^{\dag}\bSigma)\bSigma=\pi\circ \bSigma$ since $\bSigma^{\dag}\bSigma= \bSigma\bSigma^{\dag}$. Using $T\circ\pi=\bSigma$, we have 
  \begin{equation*}
    (T^{*}T)^{i}\circ \pi = (T^{*}T)^{i-1}\circ T^{*} \circ \bSigma
    = (T^{*}T)^{i-1}\circ \pi \circ \bSigma =\cdots = \pi \circ \bSigma^{i}. 
  \end{equation*}
  Now we have $T^{*}\mathbf{f}=\pi(\bSigma^{\dag}\bSigma\mathbf{f})=\pi(\bSigma\bSigma^{\dag}\mathbf{f})$, and 
  \begin{align*}
    (T^{*}T)^{i}T^{*}\mathbf{f} = (T^{*}T)^{i}\circ \pi(\bSigma^{\dag}\bSigma\mathbf{f})
    = \pi(\bSigma^{i}\bSigma^{\dag}\bSigma\mathbf{f}) 
    = \pi(\bSigma^{i+1}\bSigma^{\dag}\mathbf{f}).
  \end{align*}
  Therefore, the $l$-th RKHS-Krylov subspace is  
  \begin{equation*}
  \calH_{l}=  \mathrm{span}\{(T^{*}T)^{i}T^{*}\mathbf{f}\}_{i=0}^{l-1}=\pi(\mathrm{span}\{\bSigma^{i}\bSigma^{\dag}\mathbf{f}\}_{i=1}^{l}) = \pi(\mathcal{K}_l).
  \end{equation*}

  Step 4: Prove $\phi_{l}=\pi(\mathbf{c}_l)$ with $\bc_l$ in \eqref{LS_sub}, i.e., $\bc_l= \argmin{\bc \in \mathcal{K}_l} \|\bSigma \bc - \bf \|_2$. From the above results, we have
  \begin{equation*}
    \phi_{l}=\argmin{\phi\in\pi(\mathcal{K}_l)}\|T\phi-\mathbf{f}\|_{2}
    = \argmin{\phi=\pi(\mathbf{c}) \atop \mathbf{c}\in\mathcal{K}_l}\|T\circ\pi(\mathbf{c})-\mathbf{f}\|_{2}
    = \argmin{\phi=\pi(\mathbf{c}) \atop \mathbf{c}\in\mathcal{K}_l}\|\bSigma\mathbf{c}-\mathbf{f}\|_2 .
  \end{equation*}
  It follows that $\phi_{l}=\pi(\mathbf{c}_l)$.
\end{proof} 

\Cref{prop:representorIR} implies that $\calR(T)\subset\calR(\bSigma)$. Furthermore, noting that $\calN(T)=H_0^\perp$, we have $\mathrm{dim}(\calR(T))=\mathrm{dim}(H_{\Gbar}/\calN(T))=\mathrm{dim}(H_0)=\mathrm{rank}(\bSigma)$, leading to $\calR(T)=\calR(\bSigma)$.
Then, we have $P_{\calN(\bSigma)^{\perp}}\mathbf{f}\in\calR(T)$ and $P_{\calN(\bSigma)}\mathbf{f}\perp\calR(T)$, leading to $\argmin{\phi \in H_{\Gbar}}\|T\phi-\mathbf{f}\|_{2}=\argmin{\phi \in H_{\Gbar}}\|T\phi-P_{\calN(\bSigma)^{\perp}}\mathbf{f}\|_{2}$ and 
 \begin{equation}\label{ls_krylov}
  \phi_{l} = \argmin{\phi\in\mathcal{H}_l}\|T\phi-P_{\calN(\bSigma)^{\perp}}\mathbf{f}\|_{2} =
  \argmin{\phi=\pi(\mathbf{c}) \atop \mathbf{c}\in\mathcal{K}_l}\|\bSigma\mathbf{c}-P_{\calN(\bSigma)^{\perp}}\mathbf{f}\|_2. 
\end{equation}
Therefore, we can obtain $\phi_{l} = \pi(\bc_l)$ by computing $\bc_l = \argmin{\bc \in\mathcal{K}_l}\|\bSigma\bc-P_{\calN(\bSigma)^{\perp}}\mathbf{f}\|_{2}$.

In practice, rather than applying CG directly, we use the Golub-Kahan bidiagonalization (GKB) to explicitly construct the solution subspace $\mathcal{K}_l$ and solve \eqref{ls_krylov} iteratively. This approach is mathematically equivalent to CG but avoids explicitly forming $T^*T$, which is more numerically stable, and the convergence of iterates can be further stabilized using the hybrid regularization method; see \cite{Kilmer2001,caruso2019convergence} for more details. 

\paragraph{Derivation of the GKB method.}
The recursive relations of GKB for $\{T,P_{\calN(\bSigma)^{\perp}}\mathbf{f}\}$ is given by:
\begin{equation}\label{GKB0}
  \begin{cases}
    \beta_{1}\mathbf{p}_1 = P_{\calN(\bSigma)^{\perp}}\mathbf{f}, \\
    \alpha_i\psi_i = T^{*}(\mathbf{p}_{i})-\beta_{i}\psi_{i-1}, \\
    \beta_{i+1}\mathbf{p}_{i+1}= T(\psi_i)-\alpha_i\mathbf{p}_{i},
  \end{cases}
\end{equation}
where $\psi_0:=0$, and $\{\alpha_i,\beta_i\}$ are computed such that $\{\psi_i\}\subset H_{\Gbar}$ and $\{\mathbf{p}_{i}\}\subset\mathbb{R}^{n_0J}$ are orthonormal, and $\mathrm{span}\{\psi_{i}\}_{i=1}^{l}=\mathcal{H}_{l}$. Let $\tilde{\pi}=\pi|_{\calN(\bSigma)^{\perp}}$, where $\pi$ is defined in \eqref{embed_pi}. Note that $\tilde{\pi}$ is injective, and the two commutative diagrams in \eqref{diags} still hold by replacing $\mathbb{R}^{n_0J}$ with $\calN(\bSigma)^{\perp}$. By \Cref{prop:representorIR}, for any $\psi_i$, there exist a unique $\mathbf{q}_{i}\in\calN(\bSigma)^{\perp}$ such that $\tilde{\pi}(\mathbf{q}_{i})=\psi_i$, and $\langle \psi_i, \psi_j \rangle_{H_{\Gbar}}=\mathbf{q}_{i}^{\top}\bSigma \mathbf{q}_{j}$. Using $T^{*}u_i=\tilde{\pi}(\bSigma^{\dag}\bSigma u_i)$, we have
\begin{equation*}
  \begin{cases}
    \alpha_i\tilde{\pi}(\mathbf{q}_{i}) = \tilde{\pi}(\bSigma^{\dag}\bSigma \mathbf{p}_{i})-\beta_{i}\tilde{\pi}(q_{i-1}), \\
    \beta_{i+1}\mathbf{p}_{i+1}= T\circ \tilde{\pi}(\psi_i)-\alpha_i\mathbf{p}_{i}.
  \end{cases}
\end{equation*}
Using $T\circ\tilde{\pi}=\bSigma$, we obtain the practical GKB recursive relations:
\begin{equation} \label{GKB1}
  \begin{cases}
    \beta_{1}\mathbf{p}_1 = P_{\calN(\bSigma)^{\perp}}\mathbf{f}, \\
    \alpha_i \mathbf{q}_{i} = \mathbf{p}_{i}-\beta_{i}\mathbf{q}_{i-1}, \\
    \beta_{i+1}\mathbf{p}_{i+1}= \bSigma \mathbf{q}_{i} -\alpha_i \mathbf{p}_{i},
  \end{cases}
\end{equation}
where $q_0:=\mathbf{0}$, and $\{\alpha_i,\beta_i\}$ are computed such that $\{\mathbf{q}_{i}\}\subset \calN(\bSigma)^{\perp}$ and $\{\mathbf{p}_{i}\}$ are $\bSigma$-orthonormal and 2-orthonormal, respectively. Here, to get the second relation of \eqref{GKB1}, we have used $\mathbf{p}_{i}\in\calN(\bSigma)^{\perp}$, which can be verified by mathematical induction.

Note that $(\calN(\bSigma)^{\perp}, \langle\cdot,\cdot\rangle_{\bSigma})$ is a finite-dimensional Hilbert space with inner product $\langle \mathbf{x},\mathbf{x}'\rangle_{\bSigma}:=\mathbf{x}^\top\bSigma \mathbf{x}$. The following theorem shows that GKB iteratively constructs a $\bSigma$-orthonormal basis of $\mathcal{K}_l$ in $(\calN(\bSigma)^{\perp}, \langle\cdot,\cdot\rangle_{\bSigma})$. The proof is in \Cref{sec:appd_A}. 

\begin{theorem}\label{thm:gkb}
  Following the notations in \Cref{prop:representorIR}, define the linear operator:
  \begin{equation}
    \widetilde{T}: (\calN(\bSigma)^{\perp}, \langle\cdot,\cdot\rangle_{\bSigma}) \rightarrow
    (\calN(\bSigma)^{\perp}, \langle\cdot,\cdot\rangle_{2}), \quad
    \mathbf{x} \mapsto \bSigma \mathbf{x}.
  \end{equation}
  Then \eqref{GKB1} is the recursive relations of the GKB for $\{\widetilde{T}, P_{\calN(\bSigma)^{\perp}}\mathbf{f}\}$, and the following properties hold:
  \begin{enumerate}
    \item[(a)] The two groups of vectors $\{\mathbf{q}_{i}\}_{i=1}^{l}$ and $\{\mathbf{p}_{i}\}_{i=1}^{l}$ are $\bSigma$-orthonormal and 2-orthonormal bases of the Krylov subspace
    \begin{equation}
      \mathcal{K}_{l}(\bSigma,P_{\calN(\bSigma)^{\perp}}\mathbf{f}):=\mathrm{span}\{\bSigma^{i}P_{\calN(\bSigma)^{\perp}}\mathbf{f}\}_{i=0}^{l-1}
      =\mathcal{K}_l.
    \end{equation}
    \item[(b)] Let the terminate step of GKB be $l_t=\argmin{i\geq 1}\{\alpha_{i+1}\beta_{i+1}=0\}$. Then $l_t\leq\mathrm{rank}(\bSigma)$, and $\phi_{l_t}=\widetilde{\phi}$, the LS estimator with minimal $H_{\Gbar}$-norm defined in \eqref{eq:lse-mini}.
    \item[(c)] Let the residual be $\mathbf{r}_l=T\phi_l-\mathbf{f}$. Then $\|\mathbf{r}_l\|_2=\|\bSigma\mathbf{c}_l-\mathbf{f}\|_2$ and $\|\phi_{l}\|_{H_{\Gbar}}=\|\mathbf{c}_l\|_{\bSigma}$, and $\{\|\mathbf{r}_l\|_2\}$ and $\{\|\mathbf{c}_l\|_{\bSigma}\}$ monotonically decreases and increases, respectively.
  \end{enumerate}
\end{theorem}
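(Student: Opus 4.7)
The plan is to recognize that, because $\bSigma$ is symmetric, the operator $\widetilde T$ is essentially self-adjoint-like under the two weighted inner products, which reduces \eqref{GKB1} to the textbook GKB recursion in trivial form; the three conclusions then follow from standard GKB theory adapted to the $\bSigma$-inner product.

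First, I would compute the adjoint $\widetilde T^{*}:(\mathcal N(\bSigma)^{\perp},\langle\cdot,\cdot\rangle_{2})\to(\mathcal N(\bSigma)^{\perp},\langle\cdot,\cdot\rangle_{\bSigma})$. From $\langle\widetilde T\mathbf x,\mathbf y\rangle_{2}=\mathbf x^{\top}\bSigma\mathbf y$ and $\langle\mathbf x,\widetilde T^{*}\mathbf y\rangle_{\bSigma}=\mathbf x^{\top}\bSigma\widetilde T^{*}\mathbf y$, one reads off $\widetilde T^{*}\mathbf y-\mathbf y\in\mathcal N(\bSigma)$; combined with $\widetilde T^{*}\mathbf y\in\mathcal N(\bSigma)^{\perp}$ this forces $\widetilde T^{*}=I|_{\mathcal N(\bSigma)^{\perp}}$. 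Substituting $A=\widetilde T$, $A^{*}=I$, $b=P_{\mathcal N(\bSigma)^{\perp}}\mathbf f$ into the abstract GKB recursion reproduces \eqref{GKB1} verbatim, and the prescribed normalizations make $\{\mathbf q_{i}\}$ $\bSigma$-orthonormal and $\{\mathbf p_{i}\}$ $2$-orthonormal. By induction, $\mathbf p_{i}\in\mathcal N(\bSigma)^{\perp}$ for all $i$, since $\bSigma\mathbf q_{i-1}\in\mathcal R(\bSigma)=\mathcal N(\bSigma)^{\perp}$ and $\mathbf p_{i-1}\in\mathcal N(\bSigma)^{\perp}$ in the recursion for $\mathbf p_{i+1}$.

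For part~(a), I would combine the standard GKB induction argument with the identity $\bSigma\bSigma^{\dagger}=P_{\mathcal N(\bSigma)^{\perp}}$, which yields $\bSigma^{i}\bSigma^{\dagger}\mathbf f=\bSigma^{i-1}P_{\mathcal N(\bSigma)^{\perp}}\mathbf f$ and hence $\mathcal K_{l}=\mathcal K_{l}(\bSigma,P_{\mathcal N(\bSigma)^{\perp}}\mathbf f)$. For part~(b), the bound $l_{t}\le\mathrm{rank}(\bSigma)$ is immediate since $\{\mathbf q_{i}\}$ is $\bSigma$-orthonormal in the $\mathrm{rank}(\bSigma)$-dimensional Hilbert space $(\mathcal N(\bSigma)^{\perp},\langle\cdot,\cdot\rangle_{\bSigma})$; at breakdown $\mathcal K_{l_{t}}$ becomes $\bSigma$-invariant and contains $P_{\mathcal N(\bSigma)^{\perp}}\mathbf f$, so the minimizer of $\|\bSigma\mathbf c-\mathbf f\|_{2}$ over $\mathcal K_{l_{t}}$ coincides with its minimizer over all of $\mathcal N(\bSigma)^{\perp}$, which is $\bSigma^{\dagger}\mathbf f=\widetilde{\mathbf c}$; applying $\pi$ and invoking Remark~\ref{unq_phi} then gives $\phi_{l_{t}}=\widetilde\phi$. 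For part~(c), the identities $\|\mathbf r_{l}\|_{2}=\|\bSigma\mathbf c_{l}-\mathbf f\|_{2}$ and $\|\phi_{l}\|_{H_{\Gbar}}=\|\mathbf c_{l}\|_{\bSigma}$ follow at once from Theorem~\ref{prop:representorIR}; monotonic decay of $\|\mathbf r_{l}\|_{2}$ is automatic from $\mathcal K_{l}\subset\mathcal K_{l+1}$, and monotonic growth of $\|\mathbf c_{l}\|_{\bSigma}$ I would extract from the classical LSQR norm-monotonicity argument of Paige--Saunders, reading the bidiagonal $(\alpha_{i},\beta_{i})$-subproblem as a standard LSQR subproblem in the $\bSigma$-weighted inner product.

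The main obstacle will be the termination step in~(b): proving rigorously that when GKB breaks down at step $l_{t}$, the Krylov subspace $\mathcal K_{l_{t}}$ already contains the minimal-$\bSigma$-norm least-squares coefficient $\bSigma^{\dagger}\mathbf f$. This requires carefully chaining together the $\bSigma$-invariance of $\mathcal K_{l_{t}}$ at breakdown, the fact that $\bSigma^{\dagger}\mathbf f\in\mathcal N(\bSigma)^{\perp}$ lies in the cyclic subspace generated by $P_{\mathcal N(\bSigma)^{\perp}}\mathbf f$ under $\bSigma$, and the slightly delicate point that $\widetilde T$ (whose domain and codomain carry different inner products) still inherits the Krylov structure of the symmetric matrix $\bSigma$ acting on $\mathcal N(\bSigma)^{\perp}$.
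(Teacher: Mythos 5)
Your proposal is correct and follows essentially the same route as the paper: compute $\widetilde{T}^{*}=I|_{\calN(\bSigma)^{\perp}}$ from the adjoint identity, identify \eqref{GKB1} as the GKB recursion for $\{\widetilde{T},P_{\calN(\bSigma)^{\perp}}\mathbf{f}\}$, match the Krylov subspaces via $\bSigma^{i}\bSigma^{\dagger}\mathbf{f}=\bSigma^{i-1}P_{\calN(\bSigma)^{\perp}}\mathbf{f}$, bound $l_t$ by $\dim\calN(\bSigma)^{\perp}=\mathrm{rank}(\bSigma)$, and invoke standard CG/LSQR monotonicity for (c). Your breakdown argument in (b) (invariance of $\mathcal{K}_{l_t}$ plus $\bSigma^{\dagger}\mathbf{f}\in\mathcal{K}_{l_t}$) is in fact more explicit than the paper's appeal to ``the basic property of CG,'' and is a valid way to close that step.
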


For $l\leq l_t$, detnote $\mathbf{P}_{l}=(\mathbf{p}_1,\dots,\mathbf{p}_{l+1})\in \mathbb{R}^{n_0J\times (l+1)}$ and $\mathbf{Q}_{l}=(\mathbf{q}_1,\dots,\mathbf{q}_{l})\in \mathbb{R}^{n_0J\times l}$. From \eqref{GKB1} we have
\begin{equation}{\label{eq:GKB_matForm}}
	\begin{cases}
		\beta_1\mathbf{P}_{l+1}\mathbf{e}_{1} = P_{\calN(\bSigma)^{\perp}}\mathbf{f},  \\
		\bSigma \mathbf{Q}_l = \mathbf{P}_{l+1}\mathbf{B}_l, \\
		\mathbf{P}_{l+1} = \mathbf{Q}_l\mathbf{B}_{l}^{\top}+\alpha_{l+1}\mathbf{q}_{l+1}\mathbf{e}_{l+1}^\top ,
	\end{cases}
\end{equation}
where $\mathbf{e}_{1}$ and $\mathbf{e}_{l+1}$ are the first and $(l+1)$-th columns of $\mathbf{I}_{l+1}$, and 
\begin{equation}\label{eq:B_l}
	\mathbf{B}_{l}
	=\begin{pmatrix}
		\alpha_{1} & & & \\
		\beta_{2} &\alpha_{2} & & \\
		&\beta_{3} &\ddots & \\
		& &\ddots &\alpha_{l} \\
		& & &\beta_{l+1}
		\end{pmatrix}\in  \mathbb{R}^{(l+1)\times l}
\end{equation}
has full column rank. Using \Cref{thm:gkb} and for any $\mathbf{c}\in\mathcal{K}_l$ letting $\mathbf{c}=\mathbf{Q}_l\mathbf{y}$ with $\mathbf{y}\in\mathbb{R}^{l}$, we have
\begin{align*}
  \min_{\mathbf{c}\in\mathcal{K}_l}\|\bSigma\mathbf{c}-P_{\calN(\bSigma)^{\perp}}\mathbf{f}\|_2
  &= \min_{\mathbf{y}\in\mathbb{R}^{l}}\|\bSigma \mathbf{Q}_l\mathbf{y}-\beta_1\mathbf{P}_{l+1}\mathbf{e}_{1}\|_{2} \\
  &= \min_{\mathbf{y}\in\mathbb{R}^{l}}\|\mathbf{P}_{l+1}(\mathbf{B}_l\mathbf{y}-\beta_1\mathbf{e}_{1})\|_{2}
  = \min_{\mathbf{y}\in\mathbb{R}^{l}}\|\mathbf{B}_l\mathbf{y}-\beta_1\mathbf{e}_{1}\|_{2} .
\end{align*}
Therefore, the $l$-th CG solution equals to
\begin{equation}\label{CG_sol}
  \phi_l=\pi(\mathbf{c}_l), \quad \mathbf{c}_l=\mathbf{Q}_{l}\mathbf{y}_l, \quad \mathbf{y}_l=\argmin{\mathbf{y}\in\mathbb{R}^{l}}\|\mathbf{B}_l\mathbf{y}-\beta_1\mathbf{e}_1\|_2.
\end{equation}
In other words, we only need to solve an $ l$-dimensional least squares problem at the $l$-th iteration to get the coefficient vector.

\paragraph{Early stopping criterion.} The CG iteration yields a regularized solution by early stopping. If the noise norm $\|\epsilon\|_{2}$ is available, where $\epsilon=(\epsilon_k(x_j))\in\mathbb{R}^{n_0J}$, then the discrepancy principle (DP) \cite{engl1996regularization} can be used to halt iteration at the earliest instance of $l$ that satisfies 
\begin{equation}\label{DP0}
	\|T\phi_l-\mathbf{f}\|_2=\|\bSigma \bc_{l}-\mathbf{f}\|_{2} \leq \tau\|\epsilon\|_{2},
\end{equation}
where $\tau$ is chosen to be marginally greater than 1. When $\|\epsilon\|_{2}$ is unavailable, we adopt the L-curve criterion \cite{hansen2010discrete}, which estimates the ideal early stopping iteration at the corner of the curve represented by 
\begin{equation}\label{LC0}
  \left(\log\|T\phi_l-\mathbf{f}\|_{2}, \log\|\phi_l\|_{H_{\Gbar}}\right) =
	\left(\log\|\bSigma\mathbf{c}_{l}-\mathbf{f}\|_{2}, \log\|\mathbf{c}_l\|_{\bSigma}\right) .
\end{equation}
Note from \Cref{thm:gkb} that the residual norm decreases monotonically while the solution norm increases monotonically, which together make the ``L''-shape of \eqref{LC0} possible. For the L-curve method, one must proceed a few iterations beyond the optimal $l$ to find its corner.

\paragraph{Hybrid regularization method.}
For the iterative method, the regularized solution is sensitive to the iteration number, and the DP or L-curve criterion may yield a suboptimal iteration number, resulting in an over- or under-regularized solution. To stabilize the convergence, we follow the idea of the hybrid regularization method; see e.g. \cite{Kilmer2001,Chungnagy2008}. At each iteration, instead of solving \eqref{ls_krylov}, we add an $H_{\Gbar}$-norm Tikhonov regularization term and solve the problem
\begin{equation}
  \phi_{\lambda_l} = \argmin{\phi\in\mathcal{H}_l}\|T\phi-P_{\calN(\bSigma)^{\perp}}\mathbf{f}\|_2 +
  \lambda_l \|\phi\|_{H_{\Gbar}}^2 ,
\end{equation}
where $\lambda_l$ is the regularization parameter that is updated at each iteration. For any $\phi\in\mathcal{H}_l$, using $\mathbf{B}_l$  and $\mathbf{Q}_l$ in \eqref{eq:GKB_matForm}--\eqref{eq:B_l} and letting $\phi=\pi(\bc)=\pi(\mathbf{Q}_{l}\mathbf{y})$ with  $\mathbf{y}\in\mathbb{R}^{l}$, we obtain
\begin{align*}
	\min_{\phi\in\mathcal{H}_l}\{\|T\phi-P_{\calN(\bSigma)^{\perp}}\mathbf{f}\|_{2}^{2}+\lambda_l\|\phi\|_{H_{\Gbar}}^2 \}
	&= \min_{\bc\in\mathcal{K}_l}\{\|\bSigma\bc-P_{\calN(\bSigma)^{\perp}}\mathbf{f}\|_{2}^{2}+\lambda_l\|\bc\|_{\bSigma}^2 \} \\
  &= \min_{\mathbf{y} \in \mathbb{R}^{l}}\{\|\mathbf{B}_l \mathbf{y} -\beta_1\mathbf{e}_1\|_{2}^2+\lambda_l\| \mathbf{y} \|_{2}^2\},
\end{align*}
where we have used $\mathbf{Q}_{l}^\top\bSigma \mathbf{Q}_{l}=\mathbf{I}_l$. Then, the $l$-th hybrid solution is
\begin{equation}\label{hyb_sol}
  \phi_{\lambda_l}=\pi(\mathbf{Q}_{l}\mathbf{y}_{\lambda_l}), \quad
  \mathbf{y}_{\lambda_l} = \argmin{y\in\mathbb{R}^{l}} \{\|\mathbf{B}_l \mathbf{y} -\beta_1\mathbf{e}_1\|_{2}^2+\lambda_l\| \mathbf{y} \|_{2}^2\}.
\end{equation}
Therefore, at each step we only need to update $\lambda_l$ and compute $\mathbf{y}_{\lambda_l}$, which is computationally efficient. We update $\lambda_l$ by the weighted GCV (WGCV) method; see \cite{Chungnagy2008,li2024preconditioned} for more details.

\section{Approximation from discrete data in practice}\label{sec:discreteFn}
In practice, the data are discrete, as in \eqref{eq:data}. Thus, to apply the automatic reproducing kernel, we need to numerically approximate the integrals in the exploration measure $\rho$, the automatic reproducing kernel and basis functions, and the matrix $\bSigma$ in Section \ref{sec.2}. 

The starting point is to approximate the function $\{ g[u_k](x_j,\cdot)\}_{k,j=1}^{n_0,J}$. Note that the explicit forms of these functions are unavailable since the data only provides $\{u_k(y_i)\}_{i=1}^{3J}$, values of these functions at finitely many points, but the functions $u_k$ are unknown. 
 For the operators in Examples \ref{example:IntOpt}--\ref{example:aggregation}, the data only defines $g[u_k](x_j,s_l)$ with $s_l = l/J$ for $1\leq l\leq J$, and one may use a rougher mesh for $s$ than these $J$ points. For generality, let the mesh points for $s$ be $\{s_l\}_{l=1}^{n_s}$. We denote values of the function $\{g[u_k](x_j,\cdot)\}_{k,j=1}^{n_0,J}$ at these mesh points by a vector 
\[
 \quad \bg_{kj} = (g[u_k](x_j,s_l))_{l\leq n_s} \in \R^{1\times n_s}, \quad 1\leq k\leq n_0, 1\leq j\leq J. 
\]  
The functions $\{g[u_k](x_j,\cdot)\}$ can then be approximated by various approaches, such as splines, wavelets, or Fourier series. For simplicity, we consider piece-wise constant approximations, i.e., 
\begin{equation}\label{eq:g_approx}
\widehat g_{kj}(s)= \sum_{l=1}^{n_s}  g[u_k](x_j,s_l) \mathbf{1}_{I_l}(s),	
\end{equation}
 with $I_l= (s_{l-1},s_l]$ with $s_0=0$. 
 
 Correspondingly, we use the Riemann sum to approximate the integrals of $\rho$ in \eqref{eq:exp_measure}, $\Gbar$ in \eqref{eq:G-rho}, and $\xi_{kj}$ in \eqref{eq:auto-basis}. Table \ref{tab:Data-approx} presents their approximations using semi-continuum and discrete data. The density of $\rho$ with the semi-continuum data is $\dot \rho(s)  \propto \frac{1}{n_0J}\sum_{kj} |g[u_k](x_j,s)| $, whose approximation is  
\begin{equation}
\begin{aligned}
\widehat  {\dot \rho}(s)  &  \propto \frac{1}{n_0J} \sum_{kjl} |g[u_k](x_j,s_l)| \mathbf{1}_{I_l}(s) =  \frac{1}{n_0J}  \sum_{kj}|\widehat g_{kj}(s) |,  \\
\Rightarrow \brho_{_D} & \propto \sum_{kj}| \bg_{kj}| \in \R^{1\times n_s}. 
\end{aligned}
\end{equation}
Here, the factor $\frac{1}{J}=|\Delta x|$ since the $x$-grid is uniform.  
Note that $\brho_{_D}$ is a discrete representation of the probability measure $\rho$, and it assigns weights to the sample points $\{s_l\}$. Since we use piece-wise constant approximations, these weights are the probability of $\rho$ on the sets $\{I_l\}$.

Similarly, we approximate the integral kernel $G(s,s') := \frac{1}{n_0J}\sum_{kj} g[u_k](x_j,s) g[u_k](x_j,s') $ in \eqref{eq:G-rho} by 
\begin{align*}
\widehat{ G}_{_D}(s,s') :  & = \frac{1}{n_0J} \sum_{k,j,l, l'} g_{kj}(s_l) g_{kj}(s_{l'}) \mathbf{1}_{I_l}(s) \mathbf{1}_{I_{l'}}(s) = \frac{1}{n_0J} \sum_{kj} \widehat g_{kj}(s)\widehat g_{kj}(s'), \\
 \Rightarrow   \bG_{_D} &=  \frac{1}{n_0J}\sum_{kj} \bg_{kj}^\top  \bg_{kj} =  \frac{1}{n_0J} \bg^\top \bg \in \R^{n_s\times n_s}. 
\end{align*}
Then,  $\widehat \Gbar_{_D}$ and $\overline{\bG}$ follows directly from the above approximations of $\rho$ and $G$, as in Table \ref{tab:Data-approx}.  

Lastly, each automatic basis functions $\xi_{kj}= \int_\calS \Gbar_{_D}(s,s')g[u_k](x_j,s')ds' $ in \eqref{eq:auto-basis} has approximations and discrete representations based on $\Gbar$ and $\overline{\bG}_{_D}$ as follows, 
\begin{equation}\label{eq:xi_kj_D}
\begin{aligned}
    \widehat{\xi_{kj}^{_D}}(s) & =   \frac{1}{n_0J} \sum_{k',j',l, l'} g_{k'j'}(s_l) g_{k'j'}(s_{l'})  g_{kj}(s_{l'}) |\Delta s| \mathbf{1}_{I_l}(s),  \\
	 \Rightarrow \bxi_{kj} & = \bg_{kj} \overline{\bG}_{_D} |\Delta s| \in \R^{1\times n_s}. 
\end{aligned}
\end{equation}
To approximate the normal matrix $\bSigma = ( \innerp{\xi_{kj},\xi_{k'j'}}_{H_{\Gbar}}) \in \R^{n_0J\times n_0J}$, recall that Lemma \ref{thm:FSOI}(c) implies $\innerp{\LGbar\psi,\phi}_{H_{\Gbar}} = \innerp{\psi,\phi}_{L^2_\rho}$ for any $\psi\in L^2_\rho$. Then, we obtain from \eqref{eq:auto-basis}  that  
\begin{align*}
 \innerp{ \xi_{kj}, \xi_{k'j'} }_{H_{\Gbar}}  
  & = \innerp{\LGbar^{-1}\xi_{kj}, \xi_{k'j'} }_{L^2_\rho} 
  =  \innerp{\frac{g[u_k](x_j,\cdot)}{\dot\rho(\cdot)}, \xi_{k'j'} }_{L^2_\rho}  \\
  & = \int_{\calS} g[u_k](x_j,s)\xi_{k'j'}(s)ds 	 \approx \int_{\calS} g[u_k](x_j,s)\widehat{\xi_{k'j'}^{_D}}(s)ds \approx\bg_{kj} \bxi_{k'j'}^\top \Delta s. 
\end{align*}
In other words, the discrete representation of $\bSigma$ is 
\[  \bSigma_{_D} : = \bg \bxi^\top \Delta s,\quad  \text{ where }\bg = (\bg_{kj})\in \R^{n_0J\times n_s},\, \bxi= (\bxi_{kj}) \in \R^{n_0J\times n_s}. 
\] 
 
To conclude, our estimator \eqref{eq:tik-estimator} in computational practice is  
\begin{equation}\label{eq:estimator-practice}
	\widehat \phi_\lambda =  \sum_{kj} \widehat c_{kj} \widehat{\xi}_{kj}^{_D}  = \sum_{l=1}^{n_s}  \widehat\bphi_{\lambda}(l)\mathbf{1}_{I_l}(s)  \Leftrightarrow \widehat\bphi_{\lambda} =  \widehat{\mathbf{c}}^\top_\lambda  \bxi \,\text{ with } \widehat{\mathbf{c}}_\lambda = 	( \bSigma_{_D}^2 + n_0J \lambda \bSigma_{_D})^{\dagger} \bSigma_{_D} \mathbf{f}, 
\end{equation}
where the basis functions $\{\mathbf{1}_{I_l}(s)\}_{l=1}^{n_s}$ originate from the piecewise constant approximation of the functions $\{g[u_k](x_j,\cdot)\}$ in \eqref{eq:g_approx}.

We summarize the above approximations from discrete data in Table \ref{tab:Data-approx}. 
\begin{table}[htb]
  \caption{Functions and arrays from the semi-continuum and discrete data. 
  }  \label{tab:Data-approx}
  {\renewcommand{\arraystretch}{1.5}%
  {\centering \footnotesize
  \begin{tabular}{ l |  l  | l }
  \toprule
  \hline 
  Semi-continuum Data  &  Discrete Data  & Vector/Arrays  \\
  \hline  
  $g_{kj}(s):= g[u_k](x_j,s)$  &   $\widehat{g}_{kj}(s)= \sum_{l=1}^{n_s}  g_{kjl} \mathbf{1}_{I_l}(s)$,  & $\bg_{kj} = (g_{kjl})_{1\leq l\leq n_s} \in \R^{1\times n_s}$  
\\
             &   with $g_{kjl}:= g[u_k](x_j,s_l)$    & $\bg = (\bg_{kj})  \in \R^{n_0J\times n_s} $ 
\\
   $f_k(x)$              &    & $\mathbf{f} = (f_k(x_j))\in \R^{n_0J \times 1}$
\\ \hline 
 $\dot\rho(s) \propto \sum_{kj}  |g_{kj}(s)| \Delta x$ & $\widehat{\dot \rho}_{_D}(s)\propto   \sum_{kjl} |g_{kjl}| \Delta x \mathbf{1}_{I_l}(s)$   & $\brho_{_D} \propto \sum_{kj}| \bg_{kj}| \in \R^{1\times n_s}$  
 \\
 $G(s,s') = \frac{1}{n_0J}\sum_{kj}g_{kj}(s)g_{kj}(s')$  & $\widehat G_{_D}(s,s') $ & $\bG_{_D} = \frac{1}{n_0J} \bg^\top \bg  \in \R^{n_s\times n_s} $  
 \\ 
 $\Gbar(s,s')=  \frac{G(s,s')}{\dot \rho(s) \dot \rho(s')}$ & $
\widehat{\Gbar}_{_D}(s,s')$  & $\overline{\bG}_{_D} = \frac{\bG_{_D}}{\brho_{_D}^\top \brho_{_D}} \in \R^{n_s\times n_s} $  
\\ 
  \hline 
         $\xi_{kj}(s)  = \int \Gbar(s,s')g_{kj}(s')ds'$  & $\widehat \xi_{kj}^{_D}(s)$  & $\bxi_{kj} = \bg_{kj} \overline{\bG}_{_D} |\Delta s| \in \R^{1\times n_s} $   \\
           &      $ \hspace{9mm} = \sum_{l=1}^{n_s}\bxi_{kj}(l) \mathbf{1}_{I_l}(s)$               & $\bxi= \bg \overline{\bG}_{_D} |\Delta s|\in \R^{n_0J\times n_s} $ \\
         $\bSigma =(\innerp{ \xi_{kj}, \xi_{k'j'}}_{H_{\Gbar}} ) $ &  & $\bSigma_{_D}=\bg  \bxi^\top |\Delta s|  \in \R^{n_0J\times n_0J} $ \\
  \hline
 $ \mathcal{E}_\mathcal{D}(\phi)$ &   \multicolumn{2}{|c}{$\widehat{\mathcal{E}_\mathcal{D}}(\phi) = \widehat{\mathcal{E}_\mathcal{D}}(\bc) = \frac{1}{n_0J}\|\bSigma_{_D}\bc -\bf \|^2 $ }\\  
\hline\hline 
  \multicolumn{3}{c}{ LSE mini-norm \hspace{27mm}  $\widetilde  \phi =  \sum_{kj} \widetilde c_{kj}\widehat  \xi_{kj}^{_D} \Leftrightarrow \hspace{3mm}\widetilde\bphi =  \widetilde{\mathbf{c}}^\top  \bxi$ with $ \widetilde{\mathbf{c}} = 	\bSigma_{_D}^\dag \mathbf{f}$  \hspace{27mm} }\\ 
  \multicolumn{3}{c}{  Tikhonov Est.\hspace{27mm}  $\widehat \phi_\lambda =  \sum_{kj} \widehat c_{kj,\lambda}\widehat  \xi_{kj}^{_D} \Leftrightarrow \hspace{2mm} \widehat\bphi_{\lambda} =  \widehat{\mathbf{c}}^\top_\lambda  \bxi 
  $ with $ \widehat{\mathbf{c}}_\lambda = 	( \bSigma_{_D}^2 + n_0J \lambda \bSigma_{_D})^{\dagger} \bSigma_{_D} \mathbf{f}$  }    \\
  \hline 
  \bottomrule 
  \end{tabular} 
  }
  }
\end{table}

Note that $\bSigma_{_D}$ is singular when $n_s<n_0J$. In other words, when estimating $\phi$ at $n_s$ evaluation points, the number of necessary features (basis functions) is at most $n_s$, so the $n_0J$ data-deduced basis functions must be linearly dependent. Consequently, in this case, it is important to not use the ridge estimator $ \mathbf{c}_{ridge} = (\bSigma_{_D} + n_0J \lambda I )^{-1}\mathbf{f}$ but use $\widehat{\mathbf{c}} = 	( \bSigma_{_D}^2 + n_0J \lambda \bSigma_{_D})^{\dagger} \bSigma_{_D} \mathbf{f}$ instead.

Importantly, the automatic basis functions $\{\xi_{kj}\}$ have two major advantages over the piece-wise constants $\{ \mathbf{1}_{I_l}(s)\}_{l=1}^{n_s}$ and other spline basis functions. First, if the analytical form of the functions $\{g[u_k](x_j,\cdot)\}$ is given, we can use the automatic basis functions directly with the coefficient $\widehat{\mathbf{c}}_\lambda$ in \eqref{eq:estimator-practice}. Second, they overcome the difficulty in computing the RKHS norm. For example, if we write $\phi(s) = \sum_l \bphi(l) \mathbf{1}_{I_l}(s)$, then the regularized problem becomes $\min_{\bphi}\|\bg \bphi- \bf\|_{2}^{2}+\lambda\|\bphi\|_{C_{rkhs}}^2$, and a major difficulty is to compute the Gram matrix $C_{rkhs}= \left(\innerp{\mathbf{1}_{I_l}, \mathbf{1}_{I_{l'}}}_{H_\Gbar}\right)_{1\leq l,l'\leq n_s}$. In contrast, the Gram matrix $\bSigma_{_D}$ for the automatic basis functions is directly available.

\section{Practical algorithms for computing the estimators}\label{sec4}
When $n_0 J$ is not large, e.g., up to a few thousands, one can compute the Tikhonov regularized estimator $\widehat{\mathbf{c}}_{\lambda}=(\bSigma_{_D}^2+n_0J \lambda \bSigma_{_D})^{\dag}\bSigma_{_D}\mathbf{f}$ based on matrix decomposition. When $n_0 J$ is large, the iterative methods can efficiently compute regularized solutions. 

\subsection{Tikhonov regularization for small datasets}\label{sec:Tik}
In Tikhonov regularization, we first compute the eigenvalue decomposition: $\bSigma_{_D} = \mathbf{U}\mathbf{\Lambda} \mathbf{U}^\top$ with $\mathbf{U}= (\mathbf{u}_1,\ldots,\mathbf{u}_{n_0J})$ and $\mathbf{\Lambda}= \mathrm{diag}(\{\lambda_i\})$, where  $\lambda_1\geq\cdots\geq\lambda_{n_0J}$ are the eigenvalues of $\bSigma_{_D}$ and $\{\mathbf{u}_i\}_{i\geq 1}$ are the corresponding orthonormal eigenvectors. The solution $\widehat{\mathbf{c}}_{\lambda}=(\bSigma_{_D}^2+n_0J \lambda \bSigma_{_D})^{\dagger}\bSigma_{_D}\mathbf{f}$ can be written as
$\widehat{\mathbf{c}}_{\lambda} = \sum_{\lambda_i>0} \mathbf{u}_i \frac{\mathbf{u}_i^\top  \bf }{\lambda_i +n_0J \lambda }.$ 
Note that the components $\mathbf{u}_i^\top \bf$ corresponding to $\lambda_i=0$ do not enter the estimator. 
To handle the numerical rank-deficient of $\bSigma_{_D}$ in practical computations, we set a small threshold $\mathtt{tol}>0$ (e.g., $\mathtt{tol}=10^{-14}$ for machine precision on the order of $10^{-16}$) and let $r=\#\{\lambda_i:\lambda_{i}>\mathtt{tol}\}$ be the numerical rank of $\bSigma_{_D}$. Then, we compute a regularized $\widehat{\mathbf{c}}_{\lambda}\in\mathrm{span}\{\mathbf{u}_{1},\ldots, \mathbf{u}_r\}$ as follows. Let $\mathbf{U}_r=(\mathbf{u}_1,\ldots,\mathbf{u}_{r}) $ and $\mathbf{\Lambda}= \mathrm{diag}(\lambda_1,\ldots,\lambda_r)$. With $\bc=\mathbf{U}_{r}\mathbf{y}$ and $\mathbf{\Lambda}_{r}^{1/2}\mathbf{y}=\mathbf{z}$, the regularization problem $\min_{\bc\in\mathcal{R}(\bSigma_{_D})}\{\|\bSigma_{_D}\bc-\bf\|_{2}^2+\lambda\bc^{\top}\bSigma_{_D}\bc\}$ becomes 
\begin{equation}\label{tikh2}
  \min_{\mathbf{y}\in\mathbb{R}^{r}}\{\|\mathbf{U}_{r}\mathbf{\Lambda}_{r}\mathbf{y}-\bf\|_{2}^{2}+\lambda\|\mathbf{\Lambda}_{r}^{1/2}\mathbf{y}\|_{2}^2\} \ \ \Leftrightarrow \ \ 
  \min_{\mathbf{z}\in\mathbb{R}^{r}}\{\|\mathbf{U}_{r}\mathbf{\Lambda}_{r}^{1/2}\mathbf{z}-\bf\|_{2}^{2}+\lambda\|\mathbf{z}\|_{2}^2\} ,
\end{equation}
and $\widehat{\mathbf{c}}_{\lambda}=\mathbf{U}_{r}\mathbf{y}_{\lambda}=\mathbf{U}_{r}\mathbf{\Lambda}_{r}^{-1/2}\mathbf{z}_{\lambda}$. Finally, we obtain $\widehat{\phi}_{\lambda}=\pi(\widehat{\mathbf{c}}_{\lambda})=\sum_{kj}\widehat{c}_{kj}\xi_{kj}$. 

In order to select the optimal $\lambda$, we can use the L-curve \cite{engl1994using} or GCV criterion \cite{golub1979generalized}. The L-curve criterion plots the following parametrized curve in log-log scale:
\begin{align}
  \begin{split}
  l(\lambda) = (x(\lambda), y(\lambda)): 
 &= \left(\log(\|T\widehat{\phi}_{\lambda}-\mathbf{f}\|_{2}), \log(\|\phi_{\lambda}\|_{H_\Gbar})\right)  \\
  &= \left(\log(\|\bSigma_{_D}\widehat{\bc}_{\lambda}-\mathbf{f}\|_{2}), \log((\widehat{\bc}_{\lambda}^{\top}\bSigma_{_D}\widehat{\bc}_{\lambda})^{\frac{1}{2}})\right) ,
  \end{split}
\end{align}
and the corner of $l(\lambda)$ corresponds to a good estimate. In practical computation, we restrict $\lambda$ in the spectral range of $\bSigma_{_D}$, and compute 
\begin{equation}\label{estim_lamb}
  \lambda^{*} = \argmax{\lambda_{r}\leq\lambda\leq\lambda_{1}} \kappa(\lambda):=
  \frac{x'y''-y'x''}{(x'^{2}+y'^{2})^{3/2}}
\end{equation}
as the optimal $\lambda$ by maximizing the signed curvature of the L-curve. For the GCV criterion, by noting that 
 \[ 
\mathbf{f}-\bSigma_{_D}\widehat{\mathbf{c}}_{\lambda}=
 (\mathbf{I}_{n_0J}-\bSigma_{_D}\bSigma_{_{D,\lambda}})\mathbf{f},\]
where $\bSigma_{_{D,\lambda}}:=(\bSigma_{_D}^2+n_0J \lambda \bSigma_{_D})^{\dag}\bSigma_{_D}$, we have the following GCV function:
\begin{equation}
  \mathrm{GCV}(\lambda)=\frac{\|(\mathbf{I}_{n_0J}-\bSigma_{_D}\bSigma_{_{D,\lambda}})\mathbf{f}\|_{2}^{2}}{(\mathrm{trace}(\mathbf{I}_{n_0J}-\bSigma_{_D}\bSigma_{_{D,\lambda}}))^2}
  = \frac{\left(\sum_{i=1}^{r}\left(\frac{n_0J\lambda \mathbf{u}_{i}^{\top}\mathbf{f}}{\lambda_{i}^{2}+n_0J\lambda}\right)^2+\sum_{i=r+1}^{n_0J}(\mathbf{u}_{i}^{\top}\mathbf{f})^2\right)}{\left(n_0J-r+\sum_{i=1}^{r}\frac{n_0J\lambda}{\lambda_{i}^{2}+n_0J\lambda}\right)^2}
\end{equation}
where we have used the numerical rank $r$ to replace $\mathrm{rank}(\bSigma_{_D})$. The optimal $\lambda$ is estimated as the minimizer of $\mathrm{GCV}(\lambda)$.

\begin{algorithm}[htb]
	\caption{Tikhonov regularization}\label{alg:tikh}
  \begin{algorithmic}[1]
  \Require Data $\mathcal{D}=\{(u_k(x_j),f_k(x_j)), j=1,\ldots, J\}_{k=1}^{n_0} $
  \State Compute basis functions $\{\xi_{kj}\}$, assemble matrix $\bSigma_{_D}$ and vector $\mathbf{f}$
  \State Compute the eigenvalue decomposition: $\bSigma_{_D} = \mathbf{U}\mathbf{\Lambda}\mathbf{U}^\top$
  \State Estimate the optimal $\lambda$ by L-curve or GCV criterion
  \State Solve \eqref{tikh2} to get $\widehat{\mathbf{c}}_{\lambda}=(\widehat{c}_{kj})$; compute $\widehat{\phi}_{\lambda}=\pi(\widehat{\mathbf{c}}_{\lambda})=\sum_{k,j}\widehat{c}_{kj}\xi_{kj}$
 \Ensure Regularized estimator $\widehat{\phi}_{\lambda}$
 \end{algorithmic}
\end{algorithm}

The algorithm of Tikhonov regularization is summarized in \Cref{alg:tikh}. This method needs to store the matrix $\bSigma_{_D}$, with the memory usage of $O(n_0^2J^2)$. The main computational cost is the eigen-decomposition of $\bSigma_{_D}$, which has the order $O(n_0^3J^3)$.

\subsection{Iterative regularization for large datasets}\label{sec:IR}
For large datasets, iterative regularization methods that rely solely on matrix-vector products are more efficient. The algorithm is based on the GKB iteration introduced in \Cref{sec2.4}. 

In the GKB method, the bi-diagonal structure of $\mathbf{B}_l$ in \eqref{eq:B_l} allows us to update $\mathbf{c}_l$ step by step without explicitly solving $\min_{\mathbf{y}}\|\mathbf{B}_l\mathbf{y}-\beta_1\mathbf{e}_1\|$. The updating procedure is based on using Givens QR factorization to $\mathbf{B}_l$, which is very similar to the LSQR algorithm; see \cite{paige1982lsqr} for the details. In practice, we first compute an approximation $\bSigma_{_D}$ to replace $\bSigma$ in the computation, and then apply the GKB procedure to update the orthonormal basis of the solution subspace and the coefficient vector. The iteration will be stopped if the early stopping criterion is satisfied. The algorithm is summarized in \Cref{alg:iter_regu}.

The hybrid regularization algorithm proceeds in the same way, except that at each iteration we update $\lambda_l$ and recompute the regularized solution $\mathbf{y}_{\lambda_l}$ from \eqref{hyb_sol}. Accordingly, we omit its pseudo-code here.

\begin{algorithm}[htb]
	\caption{Iterative regularization by GKB}\label{alg:iter_regu}
	\begin{algorithmic}[1]
    \Require Data $\mathcal{D}=\{(u_k(x_j),f_k(x_j)), j=1,\ldots, J\}_{k=1}^{n_0} $
    \State Compute basis functions $\{\xi_{kj}\}$, assemble matrix $\bSigma_{_D}$ and vector $\mathbf{f}$
    \State \textbf{(Initialization)}
		\State Compute $\bar{\mathbf{f}}=P_{\calN(\bSigma_{_D})^{\perp}}\mathbf{f}$, $\beta_{1}=\|\bar{\mathbf{f}}\|_2$, $\mathbf{p}_1=\bar{\mathbf{f}}/\beta_{1}$
    \State Compute $\alpha_1=\|\mathbf{p}_1\|_{\bSigma_{_D}}$, $\mathbf{q}_1=\mathbf{p}_1/\alpha_1$
    \State Set $\mathbf{c}_0=\mathbf{0}$, $\mathbf{w}_1=\mathbf{q}_1$, $\bar{\varphi}_{1}=\beta_1$, $\bar{\rho}_1=\alpha_1$
		\For {$i=1,2,\dots, l_{max}$}
    \State \textbf{(GKB iteration)}
		\State $\mathbf{r}=\bSigma_{_D}\mathbf{q}_{i}-\alpha_i\mathbf{p}_{i}$, $\beta_{i+1}=\|\mathbf{r}\|_2$, $\mathbf{p}_{i+1}=\mathbf{r}/\beta_{i+1}$
		\State $\mathbf{s}=\mathbf{p}_{i+1}-\beta_{i+1}\mathbf{q}_{i}$, $\alpha_{i+1}=\|\mathbf{s}\|_{\bSigma_{_D}}$, $\mathbf{q}_{i+1}=\mathbf{s}/\alpha_{i+1}$
    \State \textbf{(Apply Givens QR factorization to $\mathbf{B}_i$)}
		\State $\rho_{i}=(\bar{\rho}_{i}^{2}+\beta_{i+1}^{2})^{1/2}$
		\State $\bar{c}_{i}=\bar{\rho}_{i}/\rho_{i}$, $\bar{s}_{i}=\beta_{i+1}/\rho_{i}$
		\State $\theta_{i+1}=\bar{s}_{i}\alpha_{i+1}$, $\bar{\rho}_{i+1}=-\bar{c}_{i}\alpha_{i+1}$
		\State $\varphi_{i}=\bar{c}_{i}\bar{\varphi}_{i}$, $\bar{\varphi}_{i+1}=\bar{s}_{i}\bar{\varphi}_{i}$
    \State \textbf{(Update the coefficient vector)}
		\State $\mathbf{c}_{i}=\mathbf{c}_{i-1}+(\varphi_{i}/\rho_{i})\mathbf{w}_{i}$, $\mathbf{w}_{i+1}=\mathbf{q}_{i+1}-(\theta_{i+1}/\rho_{i})\mathbf{w}_{i}$
		\EndFor
    \If {\textit{Early stopping criterion} is satisfied}   
     \State Terminate at the estimated iteration $l_*$, let $\hat{\bc} =\bc_{l_*}=(\widehat{c}_{kj})$
     \State Compute $\hat{\phi}=\sum \hat{c}_{kj}\xi_{kj}$
    \EndIf
    \Ensure Regularized estimator $\hat{\phi}$
	\end{algorithmic}
\end{algorithm}

At the initial iteration of both methods, we compute $P_{\calN(\bSigma_{_D})^{\perp}}\mathbf{f}$. If $\bSigma_{_D}$ has full-rank or $\mathbf{f}\in\mathcal{R}(\bSigma_{_D})$, then $P_{\calN(\bSigma_{_D})^{\perp}}\mathbf{f}=\mathbf{f}$. Otherwise, noting that $P_{\calN(\bSigma_{_D})^{\perp}}\mathbf{f}=\bSigma_{_D}^{\dag}\bSigma_{_D}\mathbf{f}$, we approximate this projection by iteratively solving the minimal 2-norm least squares problem $\min_{\mathbf{v}\in\mathbb{R}^{n_0J}}\|\bSigma_{_D} \mathbf{v} -\bSigma_{_D} \mathbf{f}\|_2$. This approximation does not require high accuracy, as the presence of noise limits the achievable final precision of the regularized estimator. In practice, it is carried out efficiently via the LSQR algorithm \cite{paige1982lsqr}.

The iterative method requires $O(n_0^2J^2)$ storage, matching the storage requirements of the direct method of Tikhonov regularization. Each iteration is dominated by the matrix-vector product with large $\bSigma_D$, costing $O(n_0^2J^2)$ operations. Thus, over $l_{\text{max}}$ iterations, the total computational complexity is $O\bigl(n_0^2J^2\,l_{\max}\bigr)$. The hybrid method also incurs a total cost of $O\bigl(n_0^2J^2\,l_{\max}\bigr)$, as the additional cost of $O(l^3)$ from the SVD of $\mathbf{B}_l$ in WGCV at each iteration is negligible compared to the dominant $O(n_0^2 J^2)$ term, placing its complexity between that of the iterative method and the direct method.

\section{Numerical experiments}\label{sec5}
We present numerical results for three examples of learning kernels in operators, including integral operators, nonlocal operators, and aggregation operators in mean-field equations, as detailed in Examples \ref{example:IntOpt}--\ref{example:aggregation}. All experiments were conducted in MATLAB R2023b using double precision. The codes are available at \url{https://github.com/Machealb/Automate-kernel}.

\paragraph*{Numerical settings.}
The input data $\{u_k\}$ are described in Examples \ref{example:IntOpt}--\ref{example:aggregation} with $n_0=30$ and $\sigma_n= n^{-2}$. They lead to ill-conditioned and rank-deficient regression matrices with eigenvalues decaying near polynomially. We use a uniform mesh with mesh size $\Delta x=0.005$. We use the Gaussian quadrature integrator for the integral in the operators to generate data, and use the Riemann sum to approximate it when computing the estimators. Unless otherwise specified, for all the examples we set the noise-to-signal ratio ($nsr$) to be $nsr=0.1$, which corresponds to a noise with standard deviation of around $\sigma =0.01$. Here the signal strength is the average $L^2_\nu$-norm of the output $\{R_\phi[u_k](x_j)\}_{k,j}$.    

The true kernels $\phi$ for the three examples are 
\begin{equation*}
\phi_1(s) = \sin(2\pi s), \; \phi_2(s) = \sin(4\pi s)\mathbf{1}_{[0,0.8]}(s), \; \phi_3(s) = -2\sin^3(6\pi s), 
\end{equation*}
respectively, and they are plotted in \Cref{fig:est}. Note that the kernel $\phi_2$ of Example \ref{example:nonlocal} has a jump discontinuity. As observed in \cite{LLA22}, estimator accuracy improves when the smoothness of data matches that of the true kernel.  Accordingly, we generate discontinuous data for Example \ref{example:nonlocal} by multiplying each smooth $u_k$ in Example \ref{example:IntOpt} by the indicator of $[-0.5,0.8]$, i.e., $u_k(y) \mapsto u_k(y)\mathbf{1}_{[-0.5,0.8]}(y)$. Furthermore, these true kernels are close to the identifiable spaces $H=\mathcal{N}(\LGbar)^\perp$ for each example, making accurate estimation possible.

For each regularized estimator, we evaluate the relative $L_{\mu}^2$-error with respect to the true solution, where $\mu$ is the Lebesgue measure. When reporting the statistics of the estimators (such as their means and box plots), we perform 50 independent simulations for each test.

\paragraph{Other regularization norms.} We benchmark our $H_\Gbar$-norm against two baseline norms for regularization: a Gaussian kernel norm $H_K$ and the $L^2_\rho$-norm. The $H_K$-norm is the norm of the RKHS with the widely-used Gaussian kernel $K(s,s')=\exp(|s-s'|^2/(2\sigma_0^2))$, where the hyperparameter is $\sigma_0=0.1$ after fine-tuning. For both RKHS norms, we use their automatic basis functions to get an $n_0J\times n_0J$ linear system, and apply the Tikhonov and iterative regularization methods to compute the estimators. For the $L^2_\rho$-norm regularization, following \Cref{sec:discreteFn}, we compute the coefficients of $\hat{\phi}(s)=\sum_{l=1}^{n_s}\hat{c}_l\mathbf{1}_{I_l}(s)$ by solving the regularized least squares problem $ \argmin{\mathbf{c}\in\mathbb{R}^{n_s}}\frac{1}{n_0J}\|\mathbf{A}\mathbf{c}-\mathbf{f}\|_{2}^{2}+ \lambda \|\mathbf{c}\|_{\mathbf{B}}^{2}$, where $\mathbf{A}=\mathbf{g}\Delta s\in\mathbb{R}^{n_0J\times n_s}$ and $\|\mathbf{c}\|_{\mathbf{B}}^{2}=\mathbf{c}^{\top}\mathbf{B}\mathbf{c}$ with $\mathbf{B}=\mathrm{diag}(\brho_{_D})$. Using the transformation $\tilde{\mathbf{c}}=\mathbf{B}^{\frac{1}{2}}\mathbf{c}$ and $\tilde{\mathbf{A}}=\mathbf{B}^{-\frac{1}{2}}\mathbf{A}$, we only need to deal with the 2-norm regularizer $\|\tilde{\mathbf{c}}\|_2$ in the Tikhonov or iterative regularization.

\paragraph{Accuracy of the estimators.}
We first compare the accuracy of the estimators computed using the three regularization norms, each with the four regularization methods: Tikhonov regularization with $\lambda$ selected by the L-curve and GCV criteria, iterative regularization with early stopping determined by the L-curve, and hybrid regularization with $\lambda_l$ updated by WGCV. 
\begin{figure}[htb]
	\centering
	 \subfloat
	{\label{fig:1a}\includegraphics[width=1\textwidth]{./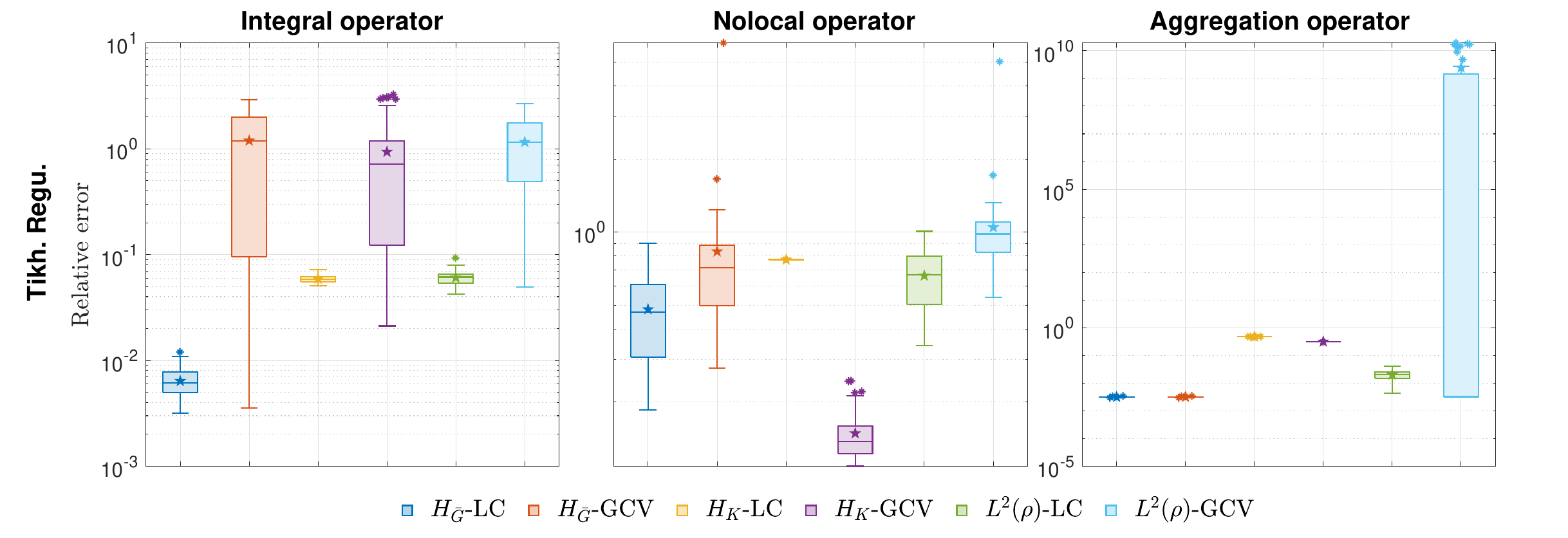}}
	\vspace{-4mm}
   \subfloat
	{\label{fig:1f}\includegraphics[width=1\textwidth]{./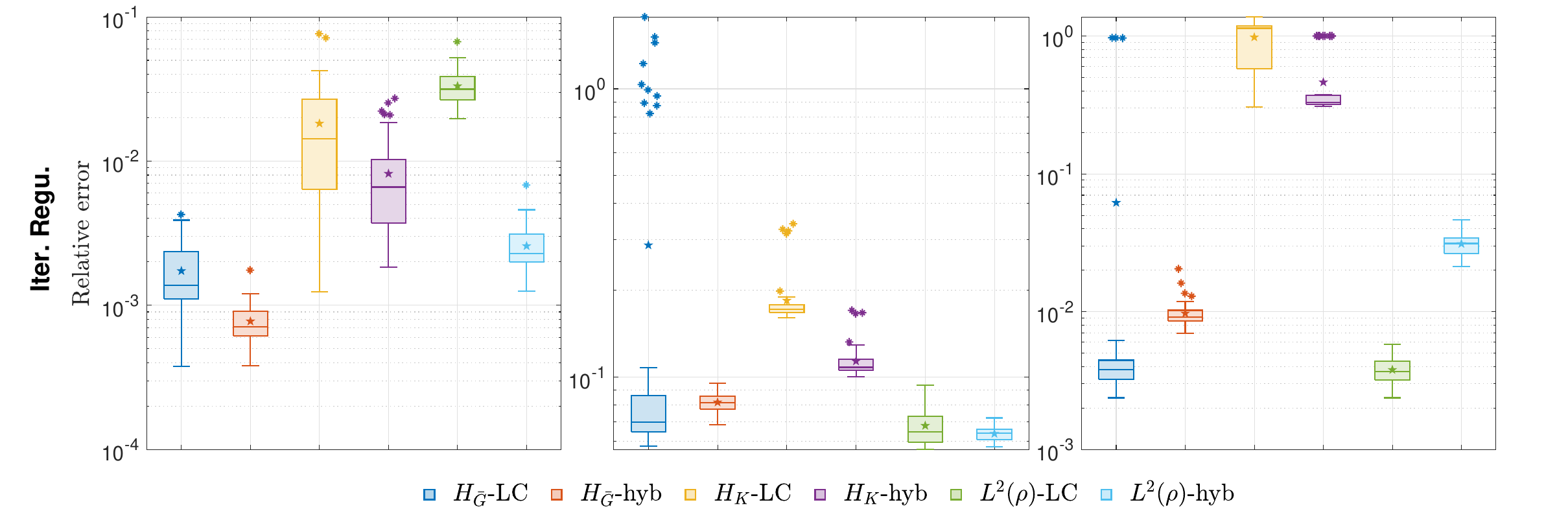}}
	\caption{Relative errors of the estimators in 50 simulations. }
	\label{fig:error}
\end{figure}

We present the $L_{\mu}^2$ ($\mu$ is the Lebesgue measure) relative errors of the estimators in 50 simulations in \Cref{fig:error}, where the box-plots display the median, lower and upper quartiles, outliers, and the minimum and maximum values that are not outliers. The abbreviation ``LC'' stands for the L-curve criterion, while ``hyb'' denotes the hybrid method.

The results demonstrate that the choice of regularization norm has a significant impact on the accuracy of the estimators. For all three examples, the $H_{\Gbar}$-norm consistently yields lower relative errors, indicating that our data-adaptive RKHS regularization can better capture the structure of the underlying nonlocal inverse problems. In contrast, the Gaussian kernel norm generally yields the largest errors with large variances. While the $L_{\rho}^2$ norm regularization occasionally achieves accuracy comparable to that of the $H_{\Gbar}$-norm, such as Example 1.2 with iterative regularization methods, it is less accurate and less stable overall.

Additionally, the L-curve and GCV methods produce comparable hyperparameter selections for Tikhonov regularization (top row of \Cref{fig:error}). However, the bottom row of \Cref{fig:error} illustrates that the purely iterative method can incur larger errors due to the instability of identifying the discrete L-curve's corner for early stopping. By contrast, the hybrid method offers greater stability and consistently achieves low errors across all cases. 

\begin{figure}[htbp]
	\centering
  \includegraphics[width=.95\textwidth]{./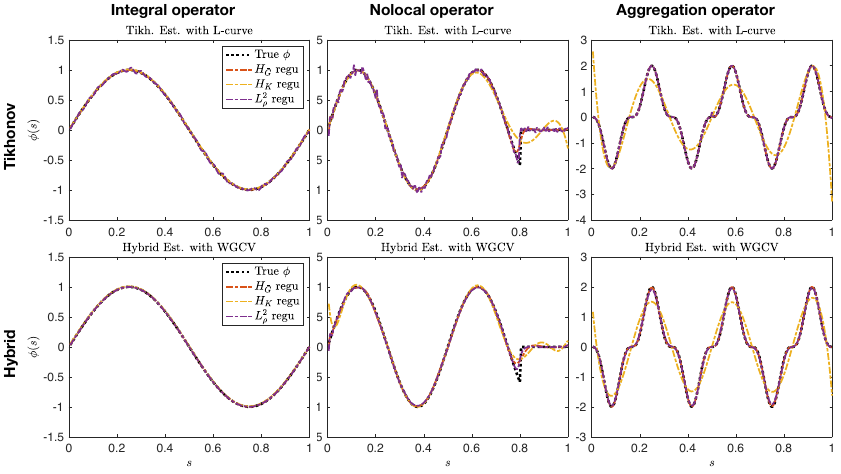}
	\caption{Typical regularized estimators by Tikhonov regularization with L-curve and hybrid regularization with WGCV using three norms: $H_\Gbar$, $H_K$, and $L^2_\rho$.}
	\label{fig:est}
\end{figure}

Figure \ref{fig:est} displays the estimators in a typical test: only the estimators of Tikhonov with L-curve and the hybrid method are shown, since the other methods yield very similar results and are omitted for clarity. Because the noise level is low, all estimators closely track the true kernels. The $H_{\Gbar}$-norm regularizer produces slightly more accurate estimates than the $L^2_\rho$ and Gaussian kernel norms. In particular, for the nonlocal operator (middle column), the $H_{\Gbar}$ regularizer better resolves the jump discontinuity than the Gaussian kernel estimator: its data-adaptive smoothness allows it to capture the discontinuity more faithfully.

\paragraph{Convergence as noise decreases.}
To compare these methods further, we examine the estimator convergence as the noise level decreases with the noise-to-signal ratio varying over $nsr\in \{1,1/2,1/4,1/8,1/16,1/32\}$ and all other settings unchanged from the previous experiment. For each noise level, we run 50 independent simulations. In Figure \ref{fig:nsrconv}, we report results for Tikhonov and iterative regularization using the L-curve for parameter selection, alongside the hybrid method; Tikhonov with GCV yields results similar to Tikhonov with L-curve and is omitted for clarity. To illustrate convergence behavior in the ideal scenario, we also include the relative error of the optimal iterative regularized solution (denoted by ``Iter.-opt.''), i.e., the solution with the minimum relative error across all iterations.

\begin{figure}[htbp]
	\centering
	\subfloat
	{\label{fig:3a}\includegraphics[width=0.95\textwidth]{./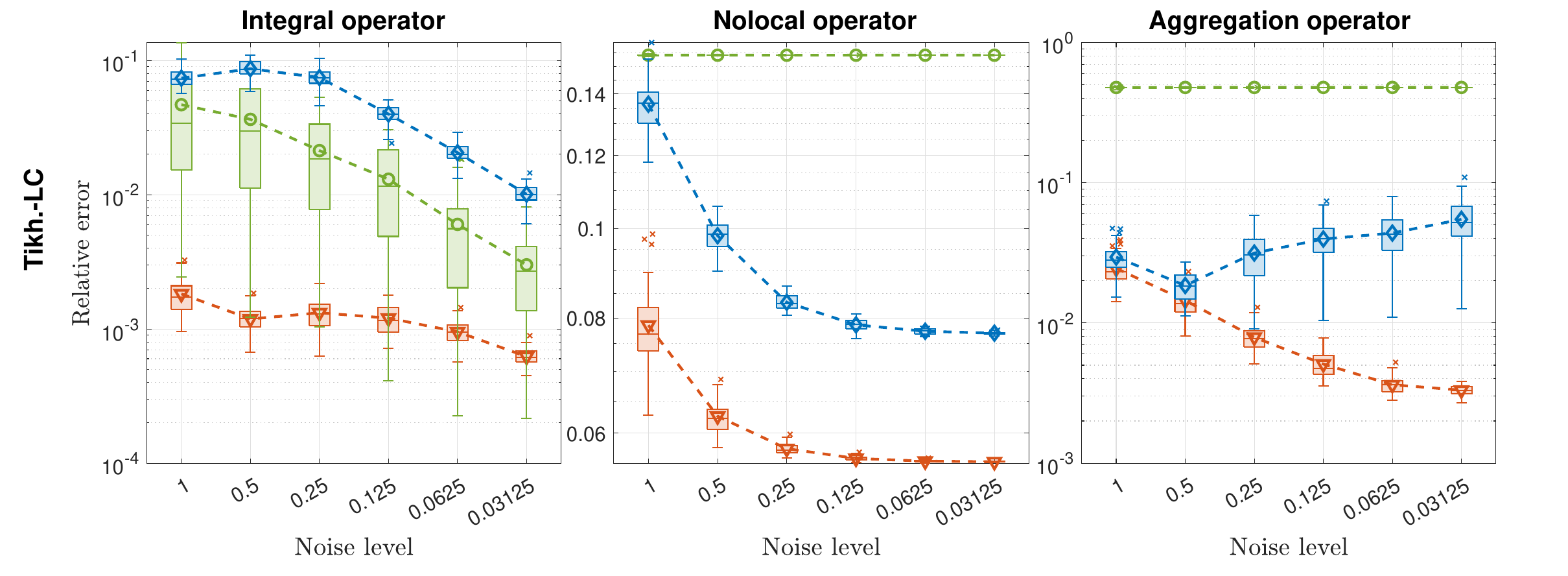}}
  \vspace{-9mm}
	\subfloat
	{\label{fig:34c}\includegraphics[width=0.95\textwidth]{./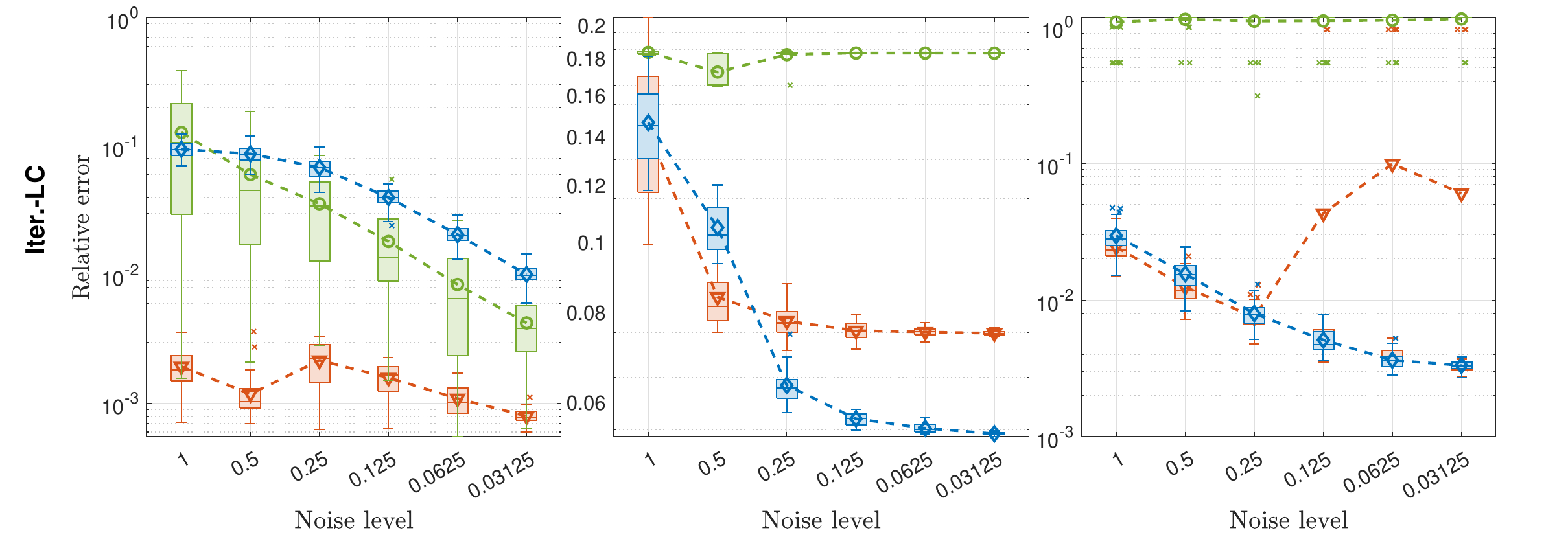}}
  \vspace{-9mm}
  \subfloat
	{\label{fig:33d}\includegraphics[width=0.95\textwidth]{./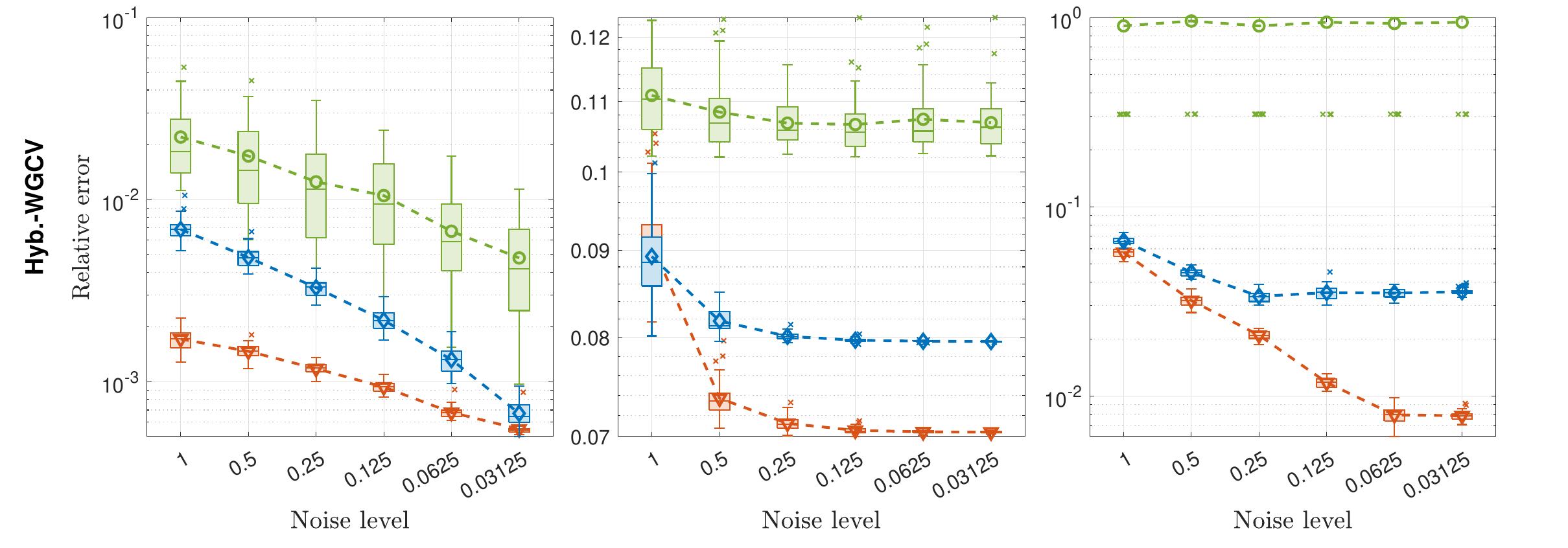}}
  \vspace{-9mm}
  \subfloat
	{\label{fig:33e}\includegraphics[width=0.95\textwidth]{./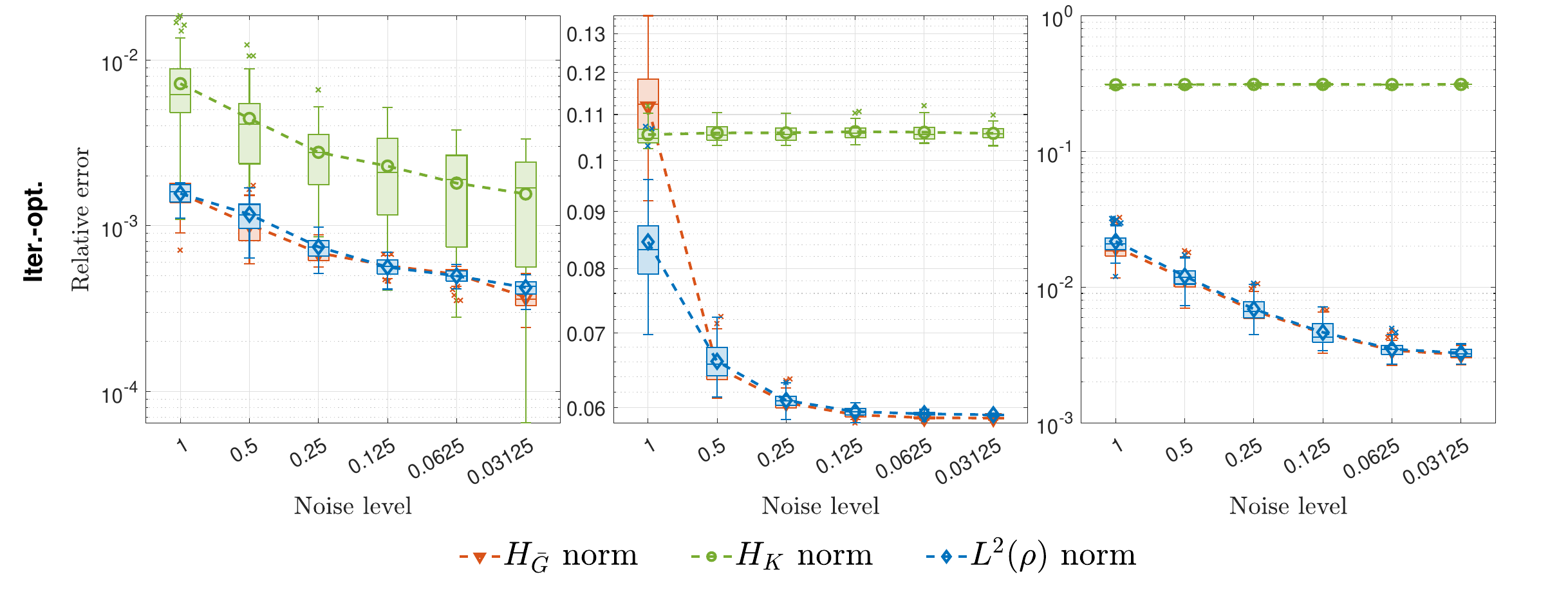}}
	\caption{Convergence of the estimators as the noise decreases in 50 simulations.}
	\label{fig:nsrconv}
\end{figure}

For the integral operator (left column of Figure \ref{fig:nsrconv}), the relative error of all estimators decreases as the noise level is reduced. For all regularization methods, the estimators obtained using the $H_{\Gbar}$-norm consistently achieve the lowest relative errors as the noise decreases. 

In particular, for the optimal iterative solutions (bottom row of Figure \ref{fig:nsrconv}), the convergence curves under the $H_{\Gbar}$ and $L_{\rho}^2$ norms are nearly identical for all the three examples, indicating that they share the same convergence rate up to a constant factor. This observation is consistent with the theoretical result presented in \cite{lu2025adaptive}. 

For the nonlocal operator (middle column of Figure \ref{fig:nsrconv}), the Gaussian kernel regularized estimators have relatively large errors that fail to decay, due to the mismatched smoothness between the true kernel and the Gaussian kernel. In contrast, for all regularization methods, the $H_{\Gbar}$- and $L^2_{\rho}$-norms consistently lead to estimator errors that decay with the noise, and their convergence curves become flat as the noise level approaches the numerical integration error.  

The aggregation-operator case (right column of Figure \ref{fig:nsrconv}) highlights the differences between methods. Both Tikhonov and hybrid methods yield convergent estimators under the $H_{\Gbar}$-norm, but not under the $L^2_{\rho}$-norm. In contrast, the iterative method performs well in the $L^2_{\rho}$-norm but suffers instability in $H_{\Gbar}$-norm due to early stopping sensitivity. All methods fail to converge under the Gaussian kernel norm, due to the relatively high frequency of the true kernel.

In summary, the $H_{\Gbar}$-norm consistently leads to convergent estimators across nearly all regularization methods and achieves the lowest relative errors as noise decreases, outperforming both the $L^2_\rho$ and the Gaussian kernel norms. These results confirm its effectiveness and robustness for learning convolution kernels. Moreover, the hybrid method exhibits the strongest convergence behavior overall, underscoring its ability to automatically select optimal regularization parameters and deliver accurate solutions.

\paragraph{Computational scalability.}
In this experiment, we evaluate the computational scalability of Tikhonov and iterative regularization for learning convolution kernels as the data size increases. We vary $n_0\in \{6,12,18,24,30,36\}$, holding all other parameters fixed as in the first experiment. We only show the results for the $H_{\Gbar}$ regularization, as it has been proven to be the most effective in prior tests. For each value of $n_0$, we set the maximum number of iterations for all the three examples as $l_{\text{max}} = {30, 30, 40, 40, 50, 50}$, which is chosen to be larger than the optimal early stopping iteration. We conduct 50 independent simulations for each setting, and record the computation times on a Debian 12 desktop with 12 Intel processors.

\begin{figure}[htbp]
	\centering
  \includegraphics[width=1.0\textwidth]{./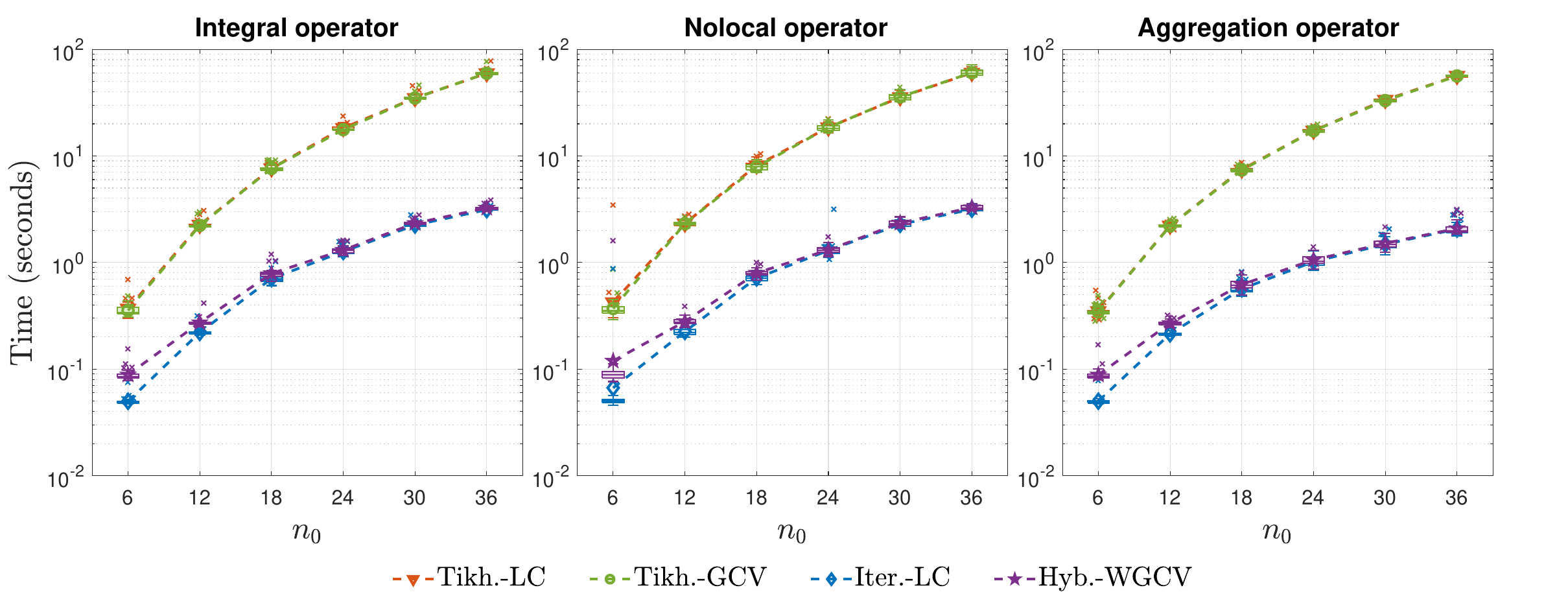}
	\vspace{-5mm}
	\caption{Running time as sample size $n_0$ increases for Tikhonov, iterative and hybrid regularization methods using $H_{\Gbar}$-norm. }
	\label{fig:njconv}
\end{figure}

\Cref{fig:njconv} reports the computation times in these tests. The iterative regularization method is orders of magnitude faster than the Tikhonov regularization method, particularly as $n_0$ increases. This behavior aligns with our theoretical analysis of the computational complexity of the two approaches. Although the hybrid method incurs slightly higher runtime than pure iterative regularization, it demonstrates significantly greater stability, as evidenced by the results of the previous experiments. Therefore, for learning convolution kernels from large datasets, the hybrid method based on iterative regularization is the most effective and reliable choice.

In summary, our numerical experiments highlight the advantages of data-adaptive RKHS regularization for learning convolution kernels. By leveraging automatically constructed basis functions, we have developed efficient and accurate iterative regularization methods that scale well with large datasets.

\section{Conclusion}\label{sec6}
We have developed robust and scalable data-adaptive (DA) RKHS regularization methods for learning convolution kernels, based on an automatic reproducing kernel that is tailored to the data and the forward operator. For discrete and finite observations, the methods use a finite set of automatic basis functions sufficient to represent minimal-norm least squares, Tikhonov, and conjugate gradient estimators in the RKHS. The DA-RKHS and automatic basis functions capture the structure imposed by the forward operator and data, enabling nonparametric and mesh-free regression without the need for reproducing kernel selection, hyperparameter tuning, or predefined bases. We have developed efficient regularization algorithms, including Tikhonov methods based on matrix decompositions for small datasets and iterative methods using only matrix-vector products for large datasets. Numerical experiments on integral, nonlocal and aggregation operators demonstrate that the proposed methods outperform the ridge regression and Gaussian process regression, highlighting their effectiveness, robustness, and scalability.

\appendix

\section{Proofs}\label{sec:appd_A}

\begin{proof}[Proof of \Cref{thm:FSOI}]
(a). It is clear that $\Gbar$ is symmetric. 
First, we show that $\Gbar$ is square-integrable.  Since $g[u_k]\in C(\calX\times \calS)$, we have 
\begin{equation} 
  \begin{aligned}
   G(s,s')&:= \int_\calX \frac{1}{n_0}\sum_{k=1}^{n_0}g[u_k](x,s)g[u_k](x,s') \, \nu(dx)\leq  C_g \dot\rho(s). 
  \end{aligned}
  \end{equation}
Then, by symmetry, we obtain that $  G(s,s') \leq C_g  \mathrm{min}\{\dot\rho(s), \dot\rho(s')\}$ for any $s,s'\in \calS$. Then, 
\[
\int_\calS \int_\calS \Gbar(s,s')^2\rho(ds)\rho(ds')  = \int_\calS \int_\calS  \frac{G(s,s')^2}{\dot\rho(s) \dot\rho(s')} dsds' 
\leq C_g^2 |\mathrm{supp}(\rho)|^2. 
\]

Then, $\LGbar$ is a compact self-adjoint operator. It is positive since 
\[
\innerp{\LGbar\phi,\phi}_{L^2_\rho} =  \int_\calS \int_\calS  \phi(s)\phi(s') G(s,s') dsds'  =\frac{1}{n_0}\sum_{k=1}^{n_0} \|R_\phi[u_k]\|^2\geq 0
\]
for any $\phi\in L^2_\rho$. 

(b). By its definition in \eqref{eq:lossFn_empirical0}, the loss function $\calE_\mathcal{D}$ can be written as \eqref{eq:lossFn-sq}. Note that 
$\innerp{\phi^D,\phi}_{L^2_\rho}  =\frac{1}{n_0}\sum_{k=1}^{n_0}\innerp{R_\phi[u_k],R_\phi[u_k] + \epsilon}_{\spaceY} $ for any $\phi\in L^2_\rho$, thus, we can write $\phi^D$ as $\phi^D=\LGbar\phi_*+ \eta$, where $\phi_*$ is the true kernel and $\eta\sim\mathcal{N}(0,\sigma_\epsilon^2\LGbar)$. In particular, when the data is noiseless, we have $\phi^D=\LGbar\phi_*$. Thus,  the loss function $\calE_{\mathcal{D}}$ has a unique minimizer $\widehat{\phi} = \LGbar^{-1}\phi^D =P_H(\phi_*)$ in $H:=\overline{\mathrm{span}\{\psi_i\}_{i:\lambda_i>0}}$. 

(c). The fact that $H_\Gbar= \LGbar^{1/2}(L^2_\rho)$ is a standard characterization of the RKHS, see, e.g.,\cite{aronszajn1950,CZ07book,LLA22}. Also, for any $\phi = \sum_{i}c_i\psi_i\in H_\Gbar$ and $\psi =\sum_{i}d_i\psi_i \in L^2_\rho$, using the fact that $\innerp{\psi_i,\psi_j}_{H_\Gbar} = \delta_{ij}\lambda_i^{-1}$, we have 
\[
\innerp{\phi,\LGbar\psi}_{H_\Gbar} = \sum_{i}\lambda_i^{-1} c_i d_i \lambda_i =  \sum_{i} c_i d_i  = \innerp{\phi,\psi}_{L^2_\rho}. 
\]
Lastly, it follows from the definition of $H$ that $H=\overline{ H_\Gbar}$. 
\end{proof}

\medskip

\begin{proof}[Proof of \Cref{thm:gkb}]
  (a) First we prove that under the canonical basis of $\mathbb{R}^{n_0J}$, it hold that $\widetilde{T}^{*}\mathbf{y}=\mathbf{y}$ for any $\mathbf{y}\in\calN(\bSigma)^{\perp}$. Since 
  \[\langle \widetilde{T}\mathbf{x}, \mathbf{y}\rangle_{2} = \langle \mathbf{x}, \widetilde{T}^{*}\mathbf{y}\rangle_{\bSigma}  \ \Leftrightarrow \ 
  \mathbf{x}^{\top}\bSigma (\widetilde{T}^{*}\mathbf{y}-\mathbf{y}) = 0, \quad \forall \ \mathbf{x}\in\calN(\bSigma)^{\perp} ,
   \]
  we have $\bSigma (\widetilde{T}^{*}y-y)=\mathbf{0}$. Using $\widetilde{T}^{*}\mathbf{y}-\mathbf{y}\in \calN(\bSigma)^{\perp}$, we obtain $\widetilde{T}^{*}\mathbf{y}-\mathbf{y}=\mathbf{0}$, which is the desired result. Using the basic property of the GKB process, $\{\mathbf{q}_{i}\}_{i=1}^m$ and $\{\mathbf{p}_{i}\}_{i=1}^m$ are the $\bSigma$-orthonormal and 2-orthonormal bases of the Krylov subspaces
  \begin{align*}
    & \mathcal{K}_{l}(\widetilde{T}^{*}\widetilde{T},\widetilde{T}^{*}P_{\calN(\bSigma)^{\perp}}\mathbf{f}) = 
    \mathrm{span}\{(\widetilde{T}^{*}\widetilde{T})^{i}\widetilde{T}^{*}P_{\calN(\bSigma)^{\perp}}\mathbf{f}\}_{i=0}^{l-1}
    = \mathrm{span}\{\bSigma^{i}P_{\calN(\bSigma)^{\perp}}\mathbf{f}\}_{i=0}^{l-1}, \\
    & \mathcal{K}_{l}(\widetilde{T}\widetilde{T}^{*},P_{\calN(\bSigma)^{\perp}}\mathbf{f}) = 
    \mathrm{span}\{(\widetilde{T}\widetilde{T}^{*})^{i}P_{\calN(\bSigma)^{\perp}}\mathbf{f}\}_{i=0}^{l-1}
    = \mathrm{span}\{\bSigma^{i}P_{\calN(\bSigma)^{\perp}}\mathbf{f}\}_{i=0}^{l-1},
  \end{align*}
  respectively. The last relation is obvious since $\bSigma^{i}P_{\calN(\bSigma)^{\perp}}=\bSigma^{i+1}\bSigma^{\dag}$.

  (b) Since $\{\mathbf{p}_{i}\}$ and $\{\mathbf{q}_{i}\}$ are 2-orthonormal and $\bSigma$-orthonormal bases, the maximum GKB iteration must not exceed the dimension $\calN(\bSigma)^{\perp}$, which is $\mathrm{rank}(\bSigma)$, that is, $l_t\leq \mathrm{rank}(\bSigma)$. Using \Cref{prop:representorIR} and that $\mathcal{H}_m=\pi(\mathcal{K}_l)$, $\mathcal{K}_l \subset \calN(\bSigma)^{\perp}$, and $\pi|_{\calN(\bSigma)^{\perp}}\rightarrow H_{\Gbar}$ is injective, there is a one-to-one correspondence between the CG for \eqref{ls_krylov} and the CG for $\min_{\bc\in\calN(\bSigma)^{\perp}}\|\widetilde{T}\bc-P_{\calN(\bSigma)^{\perp}}\mathbf{f}\|_2$. Therefore, the CG for $T$ and $\widetilde{T}$ terminate at the same step, the basic property of CG implies that $\widetilde{\phi}=\phi_{l_t}=\pi(\bc_{l_t})$.

  (c) Using $T\circ \pi =\bSigma$, we have $T\phi_{l}-\mathbf{f}=T\circ\pi(\bc_l)-\mathbf{f}=\bSigma\bc_{l}-\mathbf{f}$. Using \eqref{eq:Sigma-f} we get $\|\phi_{l}\|_{H_{\Gbar}}=\|\bc_{l}\|_{\bSigma}$. The basic property of CG for $T$ states that with a zero initial solution, the residual norm monotonically decreases and the solution norm increases. This is the last assertion.
\end{proof}

\paragraph{Acknowledgments.} The work of FL is partially funded by the NSF DMS2238486 and AFOSR FA9550-21-1-0317. The authors would like to thank Jinchao Feng for helpful discussions. 
{\small 
\bibliographystyle{plain}
\bibliography{ref_kernel_methods,ref_regularization25_03,ref_FeiLU2505}
} 
\end{document}